\DeclareFontFamily{OT1}{rsfs}{}
\DeclareFontShape{OT1}{rsfs}{n}{it}{<-> rsfs10}{}
\DeclareMathAlphabet{\curly}{OT1}{rsfs}{n}{it}
\newcommand{\eqnum}{\refstepcounter{equation}\textup{\tagform@{\theequation}}}
\newtheorem{thm}{Theorem}[section]
\newtheorem{prop}[thm]{Proposition}
\newtheorem{lem}[thm]{Lemma}
{\theoremstyle{definition}

\newtheorem{rem}[thm]{Remark}}
\newcommand{\hl}{\mathcal{H}}
\newcommand{\cl}{\mathcal{C}}
\newcommand{\ml}{\mathcal{M}}
\newcommand{\nl}{\mathcal{N}}
\newcommand{\fl}{\mathcal{F}}
\newcommand{\ol}{\mathcal{O}}
\newcommand{\ql}{\mathcal{Q}}
\newcommand{\ssk}{\mathfrak{s}}
\newcommand{\rb}{\mathbb{R}}
\newcommand{~}{\quad}
\newcommand{\cb}{\mathbb{C}}
\newcommand{\zb}{\mathbb{Z}}
\newcommand{\undl}{\underline}
\newcommand{\wt}{\widetilde}
\newcommand{\pic}{\mathrm{Pic}}
\begin{document} %\baselineskip 15pt

\title[Remark on GW-W invariants]
{A remark on Gromov-Witten-Welschinger invariants of $\cb P^3\#\overline{\cb P}^3$}

\author{Yanqiao Ding}%$^{*}$}

\address{School of Mathematics and Statistics\\ Zhengzhou University\\
                       Zhengzhou, 450001 \\ P. R. China}
\email{yqding@zzu.edu.cn}

%\thanks{${}^{*}$Partially supported by Startup Research Fund of Zhengzhou University (No. 161131003).}

\subjclass[2010]{Primary 14P05, 14N10; Secondary 14N35, 14P25}
\keywords{Real enumerative geometry, Welschinger invariants,
Gromov-Witten invariants.}

\begin{abstract}
  We generalize the formula of Gromov-Witten-Welschinger
  invariants of $\cb P^3$ established by E. Brugall\'e and P. Georgieva in \cite{bg2016}
  to $\cb P^3\#\overline{\cb P}^3$. Using pencils of quadrics, some real and complex
  enumerative invariants of $\cb P^3\#\overline{\cb P}^3$ can be written as the combination
  of the enumerative invariants of the blow up of $\cb P^2$ at two real points.
\end{abstract}

\maketitle

\section{Introduction}
It is a long history for mathematicians to find appropriate way to count curves.
Almost 24 years ago, inspired by string theory \cite{w1991}, Y. Ruan and G. Tian
introduced the definition of Gromov-Witten invariants on semi-positive
symplectic manifolds \cite{rt1995}.
Then the definition of Gromov-Witten invariants was generalized to symplectic manifolds
\cite{ruan1999,fo1999,lt1998} and algebraic varieties \cite{lt1998a,beh1997}.
With the help of Gromov-Witten invariants, complex enumerative
geometry experiences a rapid development. The emergence of real enumerative
invariants, such as Welschinger invariants \cite{wel2005a,wel2005b},
real Gromov-Witten invariants \cite{gz2018}, and refined tropical enumerative
invariants \cite{bg2016a}, also prompts the development of real enumerative geometry
significantly.

Welschinger invariant is a sign counting of real curves in real rational surfaces or
real convex algebraic threefolds. There are many methods, for example, tropical geometry and
degeneration formula, to investigate the Welschinger invariants of real rational surfaces
\cite{iks2009,iks2013,iks2013a,bp2014,brugalle2016,bm2007,bru2015,brugalle2016,dh2016a,ding2017}.
However, the methods for computation of
Welschinger invariants of threefolds are limited.
For instance, some particular cases of Welschinger invariants of threefolds
were calculated in \cite{bm2007,gz2013,wel2007,psw2008}.
%when there are only real point constraints \cite{bm2007},
%when the number of real point constraints is minimal \cite{gz2013}, \cite{wel2007},
%and when there is no constraints \cite{psw2008}.
Koll\'ar proposed pencils of quadrics of $\cb P^3$ can be used
to compute the enumerative invariants of $\cb P^3$ \cite{k2014}.
E. Brugall\'e and P. Georgieva completely computed the Welschinger invariants
of $\cb P^3$ \cite{bg2016}. They estiblished a relation between the GWW invariants of
$\cb P^3$ and the GWW invariants of $\cb P^1\times\cb P^1$.
In this note, we generalize this relation to $\cb P^3\#\overline{\cb P}^3$.

Equip $\cb P^3$ and $\cb P^1$ with the standard real structure which is the complex conjugation.
Denote by $Y=\cb P^3\#\overline{\cb P}^3$ the blow-up of $\cb P^3$ at a real point $x_0$,
by $l$ the strict transform of a line in $\cb P^3$ which does not pass through $x_0$,
by $E$ the exceptional divisor, and by $Q=\cb P^1\times\cb P^1$
a quadric surface in $\cb P^3$ passing through $x_0$.
According to the classification results of real surfaces \cite{kollar97,dk00,dk02},
the blow-up at a real point $x_0$ of the quadric surface
$\wt Q=(\cb P^1\times\cb P^1)_{1,0}$ is real deformation equivalent to
the blow up of $\cb P^2$ at two real points $\cb P^2_{2,0}$.
Let $l_1$, $l_2$ be the strict transforms of two lines,
which are $\cb P^1\times\{p_0\}$ and $\{p_0\}\times\cb P^1$ up to interchanging with each other,
in $\cb P^1\times\cb P^1$ not passing $x_0$.
The homology group $H_2((\cb P^1\times\cb P^1)_{1,0};\zb)$ is generated by the
classes $[l_1]$, $[l_2]$ and $[E_{\wt Q}]$, where $E_{\wt Q}$
is the exceptional divisor of $(\cb P^1\times\cb P^1)_{1,0}$.
We use $(a,b,c)$ as an abbreviation of the class $a [l_1]+b [l_2]-c[E_{\wt Q}]$
in $H_2((\cb P^1\times\cb P^1)_{1,0};\zb)$.

Let $d$ and $k$ be two positive integers with $d\geq k$. Given a generic real
configuration $\undl x$ of $2d-k$ points in $\cb P^3\#\overline{\cb P}^3$
which contains $s$ pairs of complex conjugated points.
Denote by $W_{\cb P^3\#\overline{\cb P}^3}(d[l]-k[e],s)$ the Welschinger
invariant ``counting" the real rational curves passing through $\undl x$ and
representing the class $d[l]-k[e]$ in $\cb P^3\#\overline{\cb P}^3$,
where $e$ is a line in the exceptional divisor $E$.
Denote by $W_{\wt Q}((a,b,k),s)$ the Welschinger invariant ``counting"
the real rational curves passing through $\undl y$, which is a generic real
configuration of $2(a+b)-k-1$ points in $\wt Q$ with $s$ pairs of complex conjugated points,
and representing the class $(a,b,c)$ in $\wt Q$.

\begin{thm}\label{thm1}
For any $d\in\zb_{>0}-2\zb$, $k\in2\zb_{\geq0}$ with $d\geq k$, and $s\in\{0,\ldots,d-\frac{k}2-1\}$,
we have
\begin{equation}\label{sec1-eq-thm1}
W_{\cb P^3\#\overline{\cb P}^3}(d[l]-k[e],s)=\sum_{\substack{a+b=d \\ 0\leq a<b}}
(-1)^{a+\frac{k}2}(d-2a)W_{\wt Q}((a,b,k),s).
\end{equation}
\end{thm}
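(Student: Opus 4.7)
The plan is to adapt the pencil-of-quadrics technique of Brugall\'e--Georgieva \cite{bg2016} from $\cb P^3$ to the blow-up $Y=\cb P^3\#\overline{\cb P}^3$. The key geometric observation is that any real quadric surface $Q\subset\cb P^3$ passing through $x_0$ has strict transform in $Y$ real deformation equivalent to $\wt Q=(\cb P^1\times\cb P^1)_{1,0}$, and any real rational curve in $Y$ of class $d[l]-k[e]$ lying on such a strict transform represents a class of the form $a[l_1]+b[l_2]-k[E_{\wt Q}]$ with $a+b=d$. The strategy is thus to exhibit a real pencil $\pl$ of quadrics in $\cb P^3$ through $x_0$ and to identify every rational curve contributing to $W_{\cb P^3\#\overline{\cb P}^3}(d[l]-k[e],s)$ as a rational curve on the strict transform of some member of $\pl$.

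Concretely, given the generic real configuration $\undl x$ of $2d-k$ points in $Y$, I would use $x_0$ together with a suitable subset of $\undl x$ to cut out the pencil $\pl$, so that the remaining points play the role of a real configuration $\undl y$ of cardinality $2d-k-1$ on each generic member $\wt Q_t$ of the induced pencil in $Y$. A Bezout-type dimension count then forces any real rational curve $C$ of class $d[l]-k[e]$ through $\undl x$ to lie entirely in the strict transform of some $Q_t\in\pl$. Classifying such curves by bidegree $(a,b)$ on $\wt Q\cong \wt Q_t$ and invoking the intersection-theoretic fact that a fixed rational curve of bidegree $(a,b)$ with $a<b$ is contained in exactly $d-2a$ members of $\pl$ recovers, after summing over bidegrees and combining the $(a,b)$ and $(b,a)$ contributions via swap of rulings, both the range $0\leq a<b$ and the factor $d-2a$ in \eqref{sec1-eq-thm1}.

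The main technical obstacle is the sign matching that produces $(-1)^{a+k/2}$; this is also the reason for restricting to odd $d$ and even $k$. The Welschinger sign of a real rational curve $C$ computed in $Y$ differs from its Welschinger sign computed in $\wt Q_t$ by contributions coming from: real solitary nodes of $C$ that are counted differently in the two ambient spaces, singular and non-hyperbolic real members of the pencil $\pl$, and the behavior at $x_0$ upon contracting the exceptional divisor. Following the local case analysis of \cite{bg2016}, the exponent $a$ should arise from the alternation between the two real rulings of $\wt Q$, while $k/2$ records the parity of real tangencies acquired at $x_0$. Establishing that the signed count is independent of the choice of pencil and of the configuration $\undl x$, and in particular controlling the wall-crossings between connected components of the real moduli of configurations on $\wt Q$, will be the heart of the argument.
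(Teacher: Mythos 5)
Your high-level strategy (a real pencil of quadrics, reduction to curve counts on $\wt Q$, and a sign comparison) is indeed the paper's, but the way you set it up contains a gap that breaks the key containment step. You propose to cut out the pencil $\pl$ using $x_0$ and a subset of $\undl x$ and then treat the remaining points as a configuration on ``each generic member'' of the pencil. This cannot work: a point not in the base locus of the pencil lies on exactly one member, and for a general configuration $\undl x$ a rational curve of class $d[l]-k[e]$ through $\undl x$ does not lie on any quadric at all, so no Bezout count forces containment. (Also, numerically, a pencil of quadrics is cut out by $8$ points, which is incompatible with leaving $2d-k-1$ points over.) The correct move, which is what the paper does following Koll\'ar and Brugall\'e--Georgieva, is to fix a real elliptic quartic $C_4\ni x_0$, take $\ql$ to be the pencil of quadrics containing $C_4$ (so $C_4$ is the base locus), and use the invariance of the Welschinger count to specialize \emph{all} $2d-k$ points of $\undl x$ onto $\wt C_4$; then every member of $\wt\ql$ contains $\undl x$, and a curve of class $d[l]-k[e]$ meets each $\wt Q$ in $2d-k$ points, so passing through $\undl x$ plus one more point of some $\wt Q$ forces $C\subset\wt Q$ (Lemma~\ref{subsec3.1-lem-1}).

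Two further points are wrong or missing. First, the multiplicity $d-2a$ is not the number of pencil members containing a \emph{fixed} curve (each curve lies on exactly one member); it is the number of real members $\wt Q$ of $\wt\ql$ that carry curves of bidegree $(a,d-a)$ through $\undl x$, and it is computed via the group law on $\wt C_4$: such members correspond to real solutions $L_2\in\pic_2(\wt C_4)$ of $(d-2a)L_2=(d-a)\tilde h-ke_{\wt Q}-\undl x$, i.e.\ to real torsion points of odd order $d-2a$, of which there are exactly $d-2a$. Your proposal contains no mechanism (no elliptic curve, no Picard group) that could produce this count. Second, the sign $(-1)^{a+k/2}$ is the technical heart of the proof and your description of it is only a heuristic: the paper obtains it by comparing the spinor state $s_Y$ of $\tilde f$ with the Welschinger weight $m_{\wt Q}$ on the surface, via Proposition~\ref{subsec3.1-prop-2} (the subbundle $\rb N'_{\tilde f}$ realizes $\rb L_{\tilde f}$ iff $a>b$) and Lemma~\ref{subsec3.1-lem-2} ($s_Y(\rb N'_{\tilde f})=m_{\wt Q}+b+\tfrac k2$), the latter requiring a degeneration of the connected sum at $x_0$, an explicit smoothing of the nodes and of the order-$r$ point on the exceptional divisor, and the adjunction formula; the contribution $\tfrac k2$ comes from the node count $(a-1)(b-1)-\tfrac{k^2}2+\tfrac k2$ and the spinor states of the $r$ lines in $\overline{\rb P}^3$, not from ``real tangencies at $x_0$''. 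Finally, the invariance under change of configuration that you propose to establish by wall-crossing is already Welschinger's theorem and need not be reproved.
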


By Remark $\ref{subsec4.1-rem-2}$, $W_{\cb P^3\#\overline{\cb P}^3}(d[l]-k[e],s)=0$,
where $d$, $k$ are positive even integers and $s\leq d-\frac{k}2-1$.
Since Welschinger invariants of any blow-up of the real projective plane and the real quadric $Q$ in $\cb P^3$
are computed by Horev and Solomon in \cite{hs2012},
one can use equation ($\ref{sec1-eq-thm1}$) to determine all the Welschinger invariants
$W_{\cb P^3\#\overline{\cb P}^3}(d[l]-k[e],s)$ with $k$ is even.

A similar result of Gromov-Witten invariants can also be derived.
Denote by $\langle[pt],\ldots,[pt]\rangle^{\cb P^3\#\overline{\cb P}^3}_{0,d[l]-k[e]}$
the Gromov-Witten invariant counting the rational curves in $\cb P^3\#\overline{\cb P}^3$
passing through $2d-k$ generic points and representing the class $d[l]-k[e]$.
%where $e$ is a line in the exceptional divisor of $\cb P^3\#\overline{\cb P}^3$.
Denote by $\langle[pt],\ldots,[pt]\rangle^{\wt Q}_{0,(a,b,k)}$ the Gromov-Witten invariant
counting the rational curves in $\wt Q$ passing through $2(a+b)-k-1$ generic points
and representing the class $(a,b,c)$.

\begin{thm}\label{thm2}
Let $d\in\zb_{>0}$, $k\in\zb_{\geq0}$ and $d\geq k$. Then
\begin{equation}\label{sec1-eq-thm2}
\langle[pt],\ldots,[pt]\rangle^{\cb P^3\#\overline{\cb P}^3}_{0,d[l]-k[e]}=
\sum_{\substack{a+b=d \\ 0\leq a<b}}(d-2a)^2\langle[pt],\ldots,[pt]\rangle^{\wt Q}_{0,(a,b,k)}.
\end{equation}
\end{thm}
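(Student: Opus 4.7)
The plan is to mirror Brugall\'e--Georgieva's proof of the analogous formula for $\cb P^3$ from \cite{bg2016} and adapt it to $Y=\cb P^3\#\overline{\cb P}^3$ and the class $d[l]-k[e]$; the argument should run parallel to the degeneration underlying Theorem~\ref{thm1}, simply substituting Gromov-Witten for Welschinger counts. I take a pencil of quadrics in $\cb P^3$ all passing through $x_0$; their proper transforms give a pencil $\{\tilde Q_t\}_{t\in\cb P^1}$ in $Y$ whose smooth members are each isomorphic to $\tilde Q$. The intersection number $(d[l]-k[e])\cdot [\tilde Q_t]=2d-k$ shows that a rational curve $C$ of class $d[l]-k[e]$ either lies inside some $\tilde Q_t$ or projects via the pencil to a degree $2d-k$ morphism $C\to\cb P^1$.

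By deformation invariance of Gromov-Witten invariants, I specialize the $2d-k$ constraint points so that $2d-k-1$ of them lie generically on a fixed smooth member $\tilde Q_0$, while the remaining point $p$ is generic in $Y$ and lies on a unique pencil member $\tilde Q_{t_0}$. Since $2d-k-1$ is exactly the expected number of point constraints for a finite count of rational curves on $\tilde Q$ of class $(a,b,k)$ with $a+b=d$, a dimension count on $\overline{\mathcal M}_{0,2d-k}(Y,d[l]-k[e])$ should force the contributing curves to be of the form $C\subset\tilde Q_0$ of class $(a,b,k)$ through the $2d-k-1$ points on $\tilde Q_0$, with $C$ additionally meeting $\tilde Q_{t_0}$ at $p$. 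The multiplicity $(d-2a)^2$ then arises as in \cite{bg2016}: one factor $(d-2a)=|b-a|$ counts how a fixed bidegree-$(a,b)$ curve in $\tilde Q_0$ can be made to pass through $p$ via the pencil (after consuming $2d-k-1$ intersections on $\tilde Q_0$), while the second factor comes from the normal-bundle contribution $N_{\tilde Q_0/Y}=\mathcal O_{\tilde Q_0}(2H-E)|_{\tilde Q_0}$ combined with the bidegree asymmetry $(a,b)\leftrightarrow(b,a)$. The restriction $0\leq a<b$ absorbs this last symmetry, and $a=b$ drops out because the weight $(d-2a)^2$ vanishes.

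The main obstacle will be the rigorous verification of the multiplicity $(d-2a)^2$ via the virtual fundamental class, or equivalently a degeneration formula for GW invariants applied to the pair $(Y,\tilde Q_0)$, together with ruling out, in the limit, any rational curves not contained in some $\tilde Q_t$. The former should import cleanly from \cite{bg2016} once one checks that the presence of the exceptional divisor $E$ does not alter the local deformation calculation near a curve of class $(a,b,k)$ in $\tilde Q$, since the relevant normal bundle only changes by the restriction of $-E$ which is trivial when $C\cdot E$ matches the expected count. The latter follows from the dimension argument sketched above, using genericity of $p$ off the base locus of the pencil; summing the resulting contribution over splittings $d=a+b$ with $0\leq a<b$ yields the stated formula.
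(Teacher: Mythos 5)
There is a genuine gap: both your choice of point configuration and your account of the multiplicity $(d-2a)^2$ diverge from what actually makes the argument work, and neither would survive a rigorous check. The paper's proof (following Koll\'ar and Brugall\'e--Georgieva) places \emph{all} $2d-k$ constraint points on the strict transform $\wt C_4$ of a degree-$4$ elliptic curve through $x_0$, i.e.\ on the base locus of the pencil $\wt\ql$. Only then does every rational curve of class $d[l]-k[e]$ through the points meet every member $\wt Q$ of the pencil in at least $2d-k+1>(d[l]-k[e])\cdot[\wt Q]$ points and hence lie inside some member (Lemma~\ref{subsec3.1-lem-1}). With your configuration ($2d-k-1$ points generic on a fixed $\wt Q_0$ plus one generic $p$ elsewhere) this forcing fails; worse, a curve of class $(a,b,k)$ contained in $\wt Q_0$ and passing through $2d-k-1$ generic points of $\wt Q_0$ is already rigid there and will not pass through a generic $p\notin\wt Q_0$, so the curves you describe as contributing do not exist for that configuration.

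The factor $(d-2a)^2$ is not an incidence or normal-bundle multiplicity --- every curve is counted with multiplicity one. It is the \emph{number of members of the pencil} containing a rational curve of class $(a,b,k)$ through the chosen points: by Lemma~\ref{subsec3.1-lem-1} these members correspond to the solutions $L_2\in\pic_2(\wt C_4)\cong\wt C_4$ of equation~(\ref{subsec3.1-eq-1}), $(d-2a)L_2=(d-a)\tilde h-ke_{\wt Q}-\undl x$, and multiplication by $d-2a$ on an elliptic curve has degree $(d-2a)^2$. Each such member contributes exactly $\langle[pt],\ldots,[pt]\rangle^{\wt Q}_{0,(a,b,k)}$ curves (Proposition~\ref{subsec4.1-prop-1}; passage through the $(2d-k)$-th point is automatic from the linear equivalence), and Remark~\ref{subsec4.1-rem-1} identifies the total count with the left-hand side. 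No degeneration formula, virtual class, or normal-bundle computation enters; only these transversality statements and the $\pic(\wt C_4)$ bookkeeping. Your proposal is missing this central idea and substitutes for it a heuristic ($|b-a|$ incidences times a normal-bundle factor) that does not produce the weight $(d-2a)^2$ for any identifiable reason.
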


\medskip\noindent
{\bf Acknowledgments}: The author would like to thank Jianxun Hu
for his continuous support and encouragement as well as enlightening discussions.
%The research is partially supported by Startup Research Fund of Zhengzhou University (No. 161131003).
\section{Welschinger invariants}

\subsection{Welschinger invariants of  $\cb P^2_{2,0}$}

Given $a,b,c\in\zb_{\geq0}$ with $2(a+b)-c-1>0$.
Let $\wt Q=(\cb P^1\times\cb P^1)_{1,0}$ which is real deformation equivalent to $\cb P^2_{2,0}$
and $\undl x\subset \wt Q$ be a real configuration of $2(a+b)-c-1$ points
with $s$ pairs of complex conjugated points.
Denote by $\cl(\undl x)$ the set of real rational curves representing
the class $(a,b,c)$ and passing through $\undl x$ in $\wt Q$.
If $\undl x$ is generic enough, the set $\cl(\undl x)$
is finite, and every curve $C\in\cl(\undl x)$ is an immersion with only nodal points.
Denote by $m_{\wt Q}(C)$ the number of real isolated nodes
(nodes with two complex conjugated branches) of $C$.
The integer
$$
W_{\wt Q}((a,b,c),s)=\sum_{C\in\cl(\undl x)}(-1)^{m_{\wt Q}(C)}
$$
is independant of the choice of the real configuration $\undl x$,
and it only depends on $a$, $b$, $c$, and the number $s$ of pairs
of complex conjugated points in $\undl x$ (see \cite{wel2005a}).

%Let $C\subset X$ be an irreducible real rational curve such that $[C]=(a,b,2c)$.
%Assume $X$ contains a real point $x_0\in\rb C$, and fix an orientation on $T_{x_0}\rb X$.
%This orientation will induce an orientation on $T\rb X|_{\rb C}$ as
%$w_1(\rb X)\cdot[\rb C]=c_1(X)\cdot[C]=0 \mod 2$.
%Fix a Riemannian metric $g$ on the real part $\rb X$ of $X$.
%Equip an orientation on $\rb C$, and let $N_{\rb C}$ be the
%orthogonal subbundle of $\rb C$ in $\rb X$. Choose a nowhere vanishing
%smooth section $\sigma_1\in\Gamma(T\rb C)$ and a section $\sigma_2\in\Gamma(N_{\rb C})$
%such that $(\sigma_1,\sigma_2)$ is a positive basis of $T\rb X|_{\rb C}$.
%Let $N$ be the parity of the number of times the loop $(\sigma_1(x),\sigma_2(x))_{x\in\rb C}$
%rotates around the canonical basis of $T\rb X|_{\rb C}$.
%
%\begin{lem}\label{sec2-lem-1}
%Let $C$ be an irreducible real rational curve in $X$ such that $[C]=(a,b,2c)$,
%then $m_X(C)=N+c \mod 2$.
%\end{lem}
%\begin{proof}
%The proof is similar to the proof of \cite[Lemma 2.1]{bg2016}.
%Equip an orientation on $\rb C$, and smooth each node of $\rb C$ as depicted in Figure.
%We can get a collection $\gamma$ of oriented circles embedded in $\rb X$.
%Then $N$ equals the sum of the number of times the loop $(\sigma_1(x),\sigma_2(x))_{x\in\gamma}$
%rotates around the canonical basis of $T\rb X|_{\gamma}$.
%\end{proof}

\subsection{Welschinger invariants of  $\cb P^3\#\overline{\cb P}^3$}\label{subsec2.2}
%Let $Y$ be the blow-up of $\cb P^3$ at a real point $x_0$,
%and $E\subset Y$ be the exceptional divisor.
%Denote by $e$ a line in the exceptional divisor $E$,
%and by $l$ the strict transform of a line in $\cb P^3$ which does not pass through $x_0$.

Given $d,k\in\zb_{\geq0}$ with $d\geq k$, and let $\undl x$ be a generic real configuration,
which contains $s$ pairs of complex conjugated points and at least one real point,
of $2d-k$ points in $Y$. Denote by $\cl'(\undl x)$
the set of irreducible real rational curves in $Y$ of class $d[l]-k[e]$ passing through $\undl x$.
The set $\cl'(\undl x)$ is finite, and every element $f:\cb P^1\to Y$
of $\cl'(\undl x)$ is a balanced curve, i.e. $f$ is an immersion and
the quotient $N_f=f^*TY/T\cb P^1$ is isomorphic to the holomorphic bundle
$\ol_{\cb P^1}(2d-k-1)\oplus\ol_{\cb P^1}(2d-k-1)$.
Fix a spin structure $\ssk$ on $\rb P^3$ and a spin structure
$\bar\ssk$ on $\overline{\rb P}^3$. They can induce a spin structure
$\ssk\#\bar\ssk$ on $\rb Y=\rb P^3\#\overline{\rb P}^3$.
Then the integer
$$
W_Y(d[l]-k[e],s)=\sum_{C\in\cl'(\undl x)}(-1)^{s_Y(C)}
$$
is independant of the choice of $\undl x$, where $s_Y(C)$ is the spinor state of $C$.
It depends only on the class $d[l]-k[e]$, the number $s$ of complex conjugated
pairs in $\undl x$ and the spin structure $\ssk\#\bar\ssk$ (see \cite{wel2005b}).
Since we only deal with the case when $k$ is even in this note,
we always assume $k$ is even. The spinor state $s_Y(C)$ is defined as follows.

Let $(f:\cb P^1\to Y)\in\cl'(\undl x)$ be a balanced real algebraic immersion.
Fix an orientation on $\rb Y=\rb P^3\#\overline{\rb P}^3$ and equip $\rb Y$
with a Riemannian metric $g$. Choose an orientation on $f(\rb P^1)$ which is
induced by the orientation on $\rb P^1$. The tangent bundle $T\rb P^1$ is a
subbundle of $f^*_{|\rb P^1}T\rb Y$. The real vector bundle $\rb N_f$,
which is isomorphic to the orthogonal subbundle of $T\rb P^1$ in $f^*_{|\rb P^1}T\rb Y$,
has an induced orientation such that a positive orthonormal frame of $T_x\rb P^1$ followed by a
positive orthonormal frame of $\rb N_f|_x$ provides a positive orthonormal frame of $f_x^*T\rb Y$
for every $x\in\rb P^1$. Denote by $H$ the class of the real part of a section of
$P(N_f)\cong\cb P^1\times\cb P^1$ having vanishing self-intersection,
by $F$ the class of a real fiber of $P(\rb N_f)$.
The orientations on $H$, and $F$ are induced by the
orientations on $\rb P^1$ and $\rb N_f$ respectively. Choose a real holomorphic line subbundle
$L_f\subset N_f$ such that $P(L_f)\subset P(N_f)$ is a section of bidegree $(1,1)$ whose
real part is homologous to $\pm(H+F)\in H_1(\rb N_f;\zb)$. The bundle $T\rb P^1\oplus\rb L_f$
is equipped with a Riemannian metric induced by the one of $T\rb Y$. Choose a loop
of positive orthonormal frames $(\sigma_1(x),\sigma_2(x))_{x\in\rb P^1}$ of $T\rb P^1\oplus\rb L_f$
such that $(\sigma_1(x))_{x\in\rb P^1}$ is a loop of positive orthonormal frames of $T\rb P^1$.
Let $(\sigma_3(x))_{x\in\rb P^1}$ be the unique section of $f^*_{|\rb P^1}T\rb Y$
such that $(\sigma_1(x),\sigma_2(x),\sigma_3(x))_{x\in\rb P^1}$ is a loop of positive orthonormal frames
of $f^*_{|\rb P^1}T\rb Y$. Define $S_Y(f)=0$ if this loop of the $SO_3(\rb)$-principal bundle
$R_Y$ of orthonormal frames of $T\rb Y$ lifts to a loop of the associated $Spin$-principal bundle $P_Y$,
and $S_Y(f)=1$ otherwise. Note the spinor state depends only on the spin structure $\ssk\#\bar\ssk$
and the isotopy class of $\rb L_f$.

A more geometric way to explain the construction of the holomorphic line bundle $L_f$ is as follows.
The choice of the class $\pm(H+F)$ is equivalent to choice a direction
to rotate the ribbon of $f(\rb P^1)$ provided by the real holomorphic section $H\in P(N_f)$
such that the ribbon after rotated is orientable. The isotopy class of $\rb L_f$ is the isotopy
class of the real part of the degree $2d-k-2$ line subbundle of $N_f$ whose real fibers rotate positively
with respect to the positive basis of $\rb N_f$ in local holomorphic coordinates on $N_f$ defining a
real holomorphic splitting $N_f=\ol_{\cb P^1}(2d-k-1)\oplus\ol_{\cb P^1}(2d-k-1)$
(see \cite[Section 2.2]{bg2016} and \cite[Section 2.2]{wel2005b} for more details).

\section{Rational curves on a blow-up of smooth quadric of $\cb P^3$}

\subsection{Elliptic curves, pencils of quadrics and blow-up}
Let $C$ be a complex elliptic curve.
The choice of a point $p$ in $C$ will induce an isomorphism $\Phi$ between the elliptic curve $C$ and the
group $\pic_0(C)$. This isomorphism gives a group structure on $C$.
If $C$ is considered as the quotient of $\cb$ by a full rank lattice $\Lambda$
for which $p$ is identified with $0$,
the group structure induced by $\Phi$ is the same with the group structure
induced from $(\cb,+)$ by the quotient map.
$\Phi$ also induces a series of isomorphisms $\Phi_d$ between $\pic_d(C)$ and $C$
(see \cite[Section 3.1]{bg2016} for more details). In this note,
$\pic_d(C)$ is always regarded as $C$ under the isomorphism $\Phi_d$.
If $C$ is also real, with $\rb C\neq\emptyset$, and $p\in\rb C$,
there are two cases as follows (see \cite[Section 3.1]{bg2016}):
\begin{itemize}
  \item $\rb C$ has two connected components: if $d$ is even, both components contain $d$
  torsion points of order $d$ on $C$; if $d$ is odd, the connected component of $\rb C$ containing
  $p$ contains $d$ torsion points of order $d$;
  \item $\rb C$ is connected, and $\rb C$ contains $d$ torsion points of order $d$;
\end{itemize}

Let $C_4$ be a non-degenerate elliptic curve of degree $4$ in $\cb P^3$,
and $\ql$ be the pencil of quadrics in $\cb P^3$ % which is also a line in $\cb P^9$
determined by $C_4$.
Let $h\in C_4\simeq\pic_4(C_4)$ be the hyperplane section class.
There is a ramified covering map $\pi_\ql:C_4\simeq\pic_2(C_4)\to\ql$ of degree two.
From analysing the ramification value of $\pi$, one can get $\ql$ contains $4$ singular quadrics.
If $C_4$ is real and $\rb C_4\neq\emptyset$, there are three cases as follows (see \cite[Section 3.2]{bg2016}):
\begin{itemize}
  \item $\rb C_4$ is not connected, and $h$, $p$ are not on the same component:
  there is no real singular quadric in $\ql$;
  \item $\rb C_4$ is not connected, and $h$, $p$ are on the same component:
  there are $4$ real singular quadrics in $\ql$;
  \item $\rb C_4$ is connected: there are $2$ real singular quadrics in $\ql$;
\end{itemize}

Let $\pi:Y=\cb P^3\#\overline{\cb P}^3\to\cb P^3$ be the blow-up of $\cb P^3$ at $x_0\in C_4$.
Denote by $\widetilde C_4\subset Y$ and $\widetilde\ql\subset Y$ the strict transforms of
$C_4$ and $\ql$ respectively.
Denote by $H$ the strict transform of a hyperplane in $\cb P^3$ which does not pass $x_0$,
and by $\tilde h=H|_{\widetilde C_4}\in\pic_4(\widetilde C_4)$ the hyperplane section class.
The map $\pi_\ql:C_4\simeq\pic_2(C_4)\to\ql$ induces a ramified covering map
$\pi_{\wt\ql}:\wt C_4\simeq\pic_2(\wt C_4)\to\wt\ql$ of degree two,
where $\pi_{\wt\ql}(x)$, $x\in\wt C_4$, is the strict transform of
$\pi_\ql\circ\pi_{|\wt C_4}(x)$.
If $C_4$ is real, with $\rb C_4\neq\emptyset$, and $p,x_0\in \rb C_4$, there are three cases as follows:
\begin{itemize}
  \item $\rb \wt C_4$ is not connected, and $\tilde h$, $p$ are not on the same component:
  there is no blow-up of real singular quadric in $\wt\ql$;
  \item $\rb \wt C_4$ is not connected, and $\tilde h$, $p$ are on the same component:
  there are $4$ blow-up of real singular quadrics in $\wt\ql$;
  \item $\rb \wt C_4$ is connected: there are $2$ blow-up of real singular quadrics in $\wt\ql$;
\end{itemize}

\subsection{Rational curves on blow-up of smooth quadrics}

Let $\undl x\subset C_4\setminus x_0$ be a configuration of $2d-k$ distinct points,
%Let $Y=\cb P^3\#\overline{\cb P}^3$ be the blow-up of $\cb P^3$ at $x\in C_4\setminus\undl x$.
%Denote by $l\subset Y$ the strict transform of a line in $\cb P^3$ which does not pass $x_0$.
%Let $e$ be a line in the exceptional divisor $E$ of $Y$.
%Let $E_{\widetilde Q}$ be the exceptional divisor of
%the blow-up of a quadric $\widetilde Q$ of $\widetilde\ql$.
and $e_{\widetilde Q}=E_{\widetilde Q}|_{\widetilde C_4}$.
Denote by $\overline\cl'(\undl x)$ the set of connected algebraic curves of arithmetic genus 0
representing $d[l]-k[e]$ in $Y$ and passing through $\undl x$.

\begin{lem}\label{subsec3.1-lem-1}
Suppose the points in $\undl x$ are in general position in $C_4\setminus x_0$.
Then every curve $C$ in $\overline\cl'(\undl x)$ is irreducible and is contained
in a blow-up of quadric of $\widetilde\ql$. Furthermore,
the blow-up of quadrics of $\widetilde\ql$ which contain a curve in $\overline\cl'(\undl x)$
correspond to the solutions $L_2\in\pic_2(\widetilde C_4)$ of the equation
\begin{equation}\label{subsec3.1-eq-1}
(d-2a)L_2=(d-a)\tilde h-ke_{\widetilde Q}-\undl x,
\end{equation}
with $0\leq a<\frac{d}{2}$.

A curve $C$ in $\overline\cl'(\undl x)$ is linearly equivalent in such a blow-up of quadric
$\widetilde Q$ to $al_1+(d-a)l_2-kE_{\widetilde Q}$,
where $l_1$ (resp. $l_2$) is a line in $\wt Q$ whose intersection with
$\wt C_4$ is $L_2$ (resp. $\tilde h-L_2$). Any irreducible rational curve $C$ in $\wt Q$
representing $al_1+(d-a)l_2-kE_{\widetilde Q}$ and passing through $2d-k-1$ points
of $\undl x$ is in $\overline\cl'(\undl x)$.
\end{lem}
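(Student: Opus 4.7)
The plan is to adapt the $\cb P^3$ argument of \cite{bg2016} to the blow-up $Y$, with $\wt C_4$, $\wt\ql$, and $\wt Q$ playing the roles of $C_4$, $\ql$, $Q$, and with the exceptional classes $[e]\in H_2(Y)$ and $E_{\wt Q}\in H_2(\wt Q)$ tracked carefully throughout.

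First, for sufficiently generic $\undl x$ every $C\in\overline\cl'(\undl x)$ is irreducible and meets $\wt C_4$ transversely. The expected dimension of the moduli of rational maps $\cb P^1\to Y$ of class $d[l]-k[e]$ is $c_1(Y)\cdot(d[l]-k[e])=4d-2k$, and the $2d-k$ point conditions (each of codimension $2$) cut this down to a finite set of irreducible curves; reducible boundary strata lose one more dimension because in a threefold the additional condition that two components meet is codimension $3$. To force $C\subset\wt Q$ for some $\wt Q\in\wt\ql$, consider the rational map $\varphi:\pi(C)\dashrightarrow\ql\cong\cb P^1$ sending a point off $C_4$ to the unique quadric of $\ql$ through it. By Bezout in $\cb P^3$, any $Q$ not containing $\pi(C)$ satisfies $\pi(C)\cdot Q=2d$; the $2d-k$ points of $\undl x\subset C_4\subset Q$ together with multiplicity $k$ at $x_0\in C_4\subset Q$ already account for all $2d$ intersection points, so the generic fibre of $\varphi$ is empty. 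Hence $\varphi$ is constant and $C\subset\wt Q_0$ for a unique $\wt Q_0\in\wt\ql$.

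Inside $\wt Q$, write $[C]=a\,l_1+b\,l_2-k'\,E_{\wt Q}$. Pushing forward along $\wt Q\hookrightarrow Y$ sends $l_1,l_2\mapsto[l]$ and $E_{\wt Q}\mapsto[e]$, forcing $a+b=d$ and $k'=k$. Using $\wt C_4\sim 2l_1+2l_2-E_{\wt Q}$ in $\wt Q$, one computes $C\cdot\wt C_4=2a+2b-k=2d-k=|\undl x|$, so by transversality $C|_{\wt C_4}=\undl x$ as effective divisors. Restricting $\ol_{\wt Q}(C)$ to $\wt C_4$ via $l_1|_{\wt C_4}\sim L_2$, $l_2|_{\wt C_4}\sim\tilde h-L_2$, and $E_{\wt Q}|_{\wt C_4}\sim e_{\wt Q}$ yields
\[
a L_2 + (d-a)(\tilde h - L_2) - k\,e_{\wt Q} = \undl x \quad \text{in } \pic_{2d-k}(\wt C_4),
\]
which rearranges directly to $(\ref{subsec3.1-eq-1})$; the symmetry $l_1\leftrightarrow l_2$ lets us restrict to $0\leq a<d/2$.

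For the converse, let $C'$ be an irreducible rational curve in $\wt Q$ of class $a l_1+(d-a)l_2-k E_{\wt Q}$ meeting $2d-k-1$ of the points of $\undl x$. The Picard-class identity above is unchanged, so the remaining intersection point $p$ of $C'\cdot\wt C_4$ is linearly equivalent in $\pic_1(\wt C_4)$ to the unique omitted point of $\undl x$; on an elliptic curve a degree-one effective divisor is determined by its class, so $p$ is exactly that point, hence $C'\in\overline\cl'(\undl x)$. The main obstacle is the first step: showing by genericity of $\undl x$ that no pathological decomposition occurs (components lying in $\wt C_4$ or in the exceptional divisor $E\subset Y$, tangential intersections with $\wt C_4$, or multiple components sharing the basepoint $x_0$), which is where the blow-up geometry requires more care than the original $\cb P^3$ setting.
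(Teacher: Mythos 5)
Your treatment of the irreducible case coincides with the paper's: the intersection count with a member of the pencil forces $C$ into a single $\wt Q\in\wt\ql$ (the paper phrases this upstairs, noting $C\cdot\wt Q=2d-k$ while $C$ meets $\wt Q$ in at least $2d-k+1$ points; your Bezout computation downstairs with multiplicity $k$ at $x_0$ is the same estimate), and restriction to $\wt C_4$ then yields equation $(\ref{subsec3.1-eq-1})$. Your argument for the last sentence of the lemma (the leftover intersection point is linearly equivalent, hence equal, to the omitted point of $\undl x$) is correct and is actually spelled out more explicitly than in the paper.

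The genuine gap is the exclusion of reducible curves, which occupies more than half of the paper's proof and which you dismiss with a dimension count before conceding at the end that it is ``the main obstacle.'' The dimension count does not apply here: it shows that the reducible locus has image of codimension $\geq 1$ in $Y^{2d-k}$, but $\undl x$ is constrained to lie on $\wt C_4$, and $\Omega_{2d-k}\subset\wt C_4^{2d-k}$ is a $(2d-k)$-dimensional subvariety that could a priori sit entirely inside that image. Worse, once two components are forced into the same quadric \emph{surface}, the incidence condition is automatic, and a component of class $(a^i,d^i-a^i,k^i)$ can pass through $2d^i-k^i$ points of $\wt C_4$ --- one more than a generic count in $\wt Q$ permits --- for exactly the same reason that the irreducible curves of the lemma themselves exist. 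The paper closes this loophole by a dedicated argument: each component $C^i$ passes through exactly $\tfrac12 c_1(Y)\cdot[C^i]$ points of $\undl x$ (more would force $C^i=\wt C_4$), so the $\undl x^i$ partition $\undl x$; components in distinct quadrics could only meet along $\wt C_4$, which would disconnect the curve, so all components lie in one $\wt Q$; and then each component imposes its own relation in $\pic(\wt C_4)$, whose combination (equation $(\ref{subsec3.1-eq-3})$) is impossible when $\undl x$, $\tilde h$, $e_{\wt Q}$ are chosen to generate a free subgroup of rank $2d-k+2$ of $\pic(\wt C_4)$. Without some version of this Picard-theoretic argument your proof is incomplete; the same general-position-in-$\pic(\wt C_4)$ mechanism is also what excludes the case $a=d/2$ and singular quadrics, which your appeal to the $l_1\leftrightarrow l_2$ symmetry does not address.
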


\begin{proof}
The proof is similar to the proof of \cite[Proposition 3]{k2014}.
For the completeness, we give it in the following.
Suppose the curve $C\in\overline\cl'(\undl x)$ is irreducible.
Since any point of $C\setminus\undl x$ is contained in some $\wt Q$,
$C$ and $\wt Q$ intersect at $\geq 2d-k+1$ points which implies that $C\subset\wt Q$.
Hence $C$ is linearly equivalent to $al_1+(d-a)l_2-kE_{\widetilde Q}$ for some $0\leq a\leq\frac{d}{2}$.
So
\begin{equation}\label{subsec3.1-eq-2}
\begin{aligned}
\undl x=(\wt C_4\cdot C)&\sim a L_2+(d-a)(\tilde h-L_2)-ke_{\widetilde Q}\\
&=(d-a)\tilde h-(d-2a)L_2-ke_{\widetilde Q}.
\end{aligned}
\end{equation}
For the points in $\undl x$ are in general position, the case $a=d/2$ is excluded
and $\wt Q$ is the blow-up of a smooth quadric.

In the case $C\in\overline\cl'(\undl x)$ is reducible,
let $\Sigma C^i$ be a connected curve passing through $\undl x$ representing $d[l]-k[e]$.
If there is a component $C^i$ passes through more than $1/2c_1(Y)\cdot[C^i]$ points of $\undl x$,
it intersects with every $\wt Q$ of $\wt\ql$ at $>1/2c_1(Y)\cdot[C^i]$ points.
Therefore, $C^i=\wt C_4$ which is a contradiction.
So every component $C^i$ passes through exactly $1/2c_1(Y)\cdot[C^i]$ points $\undl x^i\subset\undl x$.
Since $\Sigma[C^i]=d[l]-k[e]$, $\undl x$ is the union of the disjoint sets $\undl x^i$.

If there are two curves $C^i$, $C^j$ contained in different blow-up of quadrics
$\wt Q^i\neq\wt Q^j$, we have $C^i\cap C^j\subset\wt C_4$. However,
we know different curves $C^i$ pass through different points $\undl x^i$.
The curve $\Sigma C^i$ has to be disconnected which is a contradiction.
Therefore, the curve $\Sigma C^i$ is contained in one blow-up of quadric $\wt Q$.

Suppose $C^i$ is linearly equivalent to $a^il_1+(d^i-a^i)l_2-k^ie_{\wt Q}$,
we have
$$
\undl x^i\sim (d^i-a^i)\tilde h-(d^i-2a^i)L_2-k^ie_{\widetilde Q}.
$$
Combined with equation $(\ref{subsec3.1-eq-2})$, we obtain
\begin{equation}\label{subsec3.1-eq-3}
\begin{aligned}
(d-2a)\undl x^i\sim &(d^i-2a^i)\undl x +[(d-2a)(d^i-a^i)-(d^i-2a^i)(d-a)]\tilde h\\
&+[(d^i-2a^i)k-(d-2a)k^i]e_{\widetilde Q}.
\end{aligned}
\end{equation}
We can choose the points of $\undl x$ such that $\undl x$, $\tilde h$ and
$e_{\widetilde Q}$ generate a free subgroup of rank $2d-k+2$ in $\pic(\wt C_4)$.
So equation $(\ref{subsec3.1-eq-3})$ is impossible unless $\undl x^i=\undl x$.
Therefore, there is no reducible curve $C$ for a general choice of the points $\undl x$.
\end{proof}

Given a quadric $Q$ of $\cb P^3$, and let $f:\cb P^1\to \cb P^3$
be an algebraic immersion whose image is contained in $Q$ passing through $x_0$.
Suppose $z=f^{-1}(x_0)\subset\cb P^1$.
Let $N_f'=f^*TQ/T\cb P^1$, $N_f=f^*T\cb P^3/T\cb P^1$, and $N_{Q}=T\cb P^3|_{Q}/TQ$
be the normal bundle of $Q$ in $\cb P^3$.
Let $\tilde f:\cb P^1\to Y$ be the strict transform of $f$. Denote by
$N_{\tilde f}'=\tilde f^*T\wt Q/T\cb P^1$, $N_{\tilde f}=\tilde f^*T Y/T\cb P^1$,
and $N_{\wt Q}=TY|_{\wt Q}/T\wt Q$ the normal bundle of $\wt Q$ in $Y$.
We can get an exact sequence of holomorphic vector bundles over $\cb P^1$:
\begin{equation}\label{subsec3.1-eq-4}
0\to N_{\tilde f}'\to N_{\tilde f}\to \tilde f^*N_{\wt Q}\to0.
\end{equation}
If $Q$ and $f$ are both real, we can also obtain an exact sequence fitted by the real bundles
$\rb N_{\tilde f}'$, $\rb N_{\tilde f}$ and $\rb N_{\wt Q}$.

The morphism $\pi:Y\to\cb P^3$ induces morphisms between the bundles
associated to $\tilde f$ and $f$ which are isomorphisms everywhere except $z$ and vanish at $z$.
Since $f$ is an immersion, one can see the vanishing order is $1$ in local coordinates.
Therefore, $N_{\tilde f}'=N_{f}'\otimes\ol_{\cb P^1}(-z)$,
$N_{\tilde f}=N_{f}\otimes\ol_{\cb P^1}(-z)$, and
$\tilde f^*N_{\wt Q}=f^*N_{Q}\otimes\ol_{\cb P^1}(-z)$.
It follows that if $f$ is balanced, so is $\tilde f$.
From \cite[Proposition 4.1]{bg2016},
we know that if $f$ has bidegree $(a,b)$ with $a\neq b$, then $f$ is balanced.
So the strict transform of the algebraic immersion $f:\cb P^1\to \cb P^3$
which is contained in $Q$ with bidegree $(a,b)$, $a\neq b$, passing through $x_0$ is also balanced.
%\begin{prop}[{\cite[Proposition 4.1]{bg2016}}]\label{subsec3.1-prop-1}
%Let $f:\cb\to\cb P^3$ be an algebraic immersion such that $f(\cb P^1)$ is contained in $Q$,
%where it has bidegree $(a,b)$ with $a\neq b$, then $f$ is balanced.
%\end{prop}

Let $\rb N_i'$ be the real part of the normal bundle of $l_i$ in $Q$
which is a subbundle of the real part of its normal bundle in $\cb P^3$.
From \cite[Proposition 4.2]{bg2016}, one has $s_{\cb P^3}(\rb N_1')\neq s_{\cb P^3}(\rb N_2')$.
Following \cite{bg2016}, $(l_1,l_2)$ is called a
\textit{positive basis} of $H_2(Q;\zb)$ if $s_{\cb P^3}(\rb N_1')=0$.

\begin{prop}\label{subsec3.1-prop-2}
Let $f:\cb P^1\to \cb P^3$ be a real algebraic immersion, whose image is contained in $Q$,
passing through $x_0$ such that it has bidegree $(a,b)$ in the positive basis with $a\neq b$.
Suppose $z=f^{-1}(x_0)\subset\cb P^1$ contains $k$ points, and $k$ is even.
Let $\tilde f$ be the strict transform of $f$ in $Y$.
Then the holomorphic real line subbundle $\rb N_{\tilde f}'$ of $\rb N_{\tilde f}$
realises the real isotopy class $\rb L_{\tilde f}$ if and only if $a>b$.
\end{prop}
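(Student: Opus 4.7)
The plan is to reduce the statement to its non-blown-up analogue, which is \cite[Prop.~4.2]{bg2016}. Concretely, I would combine the bundle identifications
$$N_{\tilde f}=N_f\otimes\ol_{\cb P^1}(-z),\quad N_{\tilde f}'=N_f'\otimes\ol_{\cb P^1}(-z)$$
recorded just before the statement with the corresponding result for $f:\cb P^1\to\cb P^3$ in $Q$.

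First I would observe that tensoring by a line bundle induces a canonical real isomorphism of projective bundles $P(N_{\tilde f})\simeq P(N_f)$ over $\cb P^1$, carrying the section $P(N_{\tilde f}')$ to $P(N_f')$. Since $f$ has bidegree $(a,b)$ with $a\neq b$, both $f$ and $\tilde f$ are balanced (as explained in the excerpt), and both projectivisations are identified with $\cb P^1\times\cb P^1$. A direct Chern-class computation gives $\deg N_{\tilde f}'=2(a+b)-k-2$, so $P(N_{\tilde f}')$ is a section of bidegree $(1,1)$, matching the bidegree prescribed for $P(L_{\tilde f})$.

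The heart of the argument is then to check that the above identification carries the distinguished real isotopy class $\rb L_{\tilde f}$ to $\rb L_{f}$. The defining property of $L_{\tilde f}$ is that the real part of $P(L_{\tilde f})$ is homologous to $\pm(H+F)$ in $H_1(\rb N_{\tilde f};\zb)$. Because $z=\tilde f^{-1}(x_0)$ is a real divisor of even cardinality $k$, the real structures on $N_f$ and $N_{\tilde f}=N_f\otimes\ol_{\cb P^1}(-z)$ differ by an even number of conjugate-pair sign changes, under which the homology class $H+F$ is preserved. Combined with \cite[Prop.~4.2]{bg2016}, which in the positive-basis convention asserts that $\rb N_f'$ realises $\rb L_f$ precisely when $a>b$, this yields the claim.

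The main obstacle is the second step: carefully tracking the real structure under the twist by $\ol_{\cb P^1}(-z)$, and verifying that the identification $P(N_{\tilde f})\simeq P(N_f)$ sends the isotopy class of $\rb P(L_f)$ to that of $\rb P(L_{\tilde f})$. The parity hypothesis on $k$ is precisely what ensures this compatibility; without it, the twist would flip the orientation of $\rb N_{\tilde f}$ relative to $\rb N_f$ on a non-trivial part of $\rb\cb P^1$ and the class $H+F$ would be sent to $H-F$ instead.
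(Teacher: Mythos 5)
Your proposal matches the paper's proof: both reduce to the untwisted case via the identifications $N_{\tilde f}=N_f\otimes\ol_{\cb P^1}(-z)$ and $N_{\tilde f}'=N_f'\otimes\ol_{\cb P^1}(-z)$, check that the distinguished isotopy class (equivalently, the positive rotation of the real fibers of the subbundle) is preserved under this twist using that $k$ is even, and then invoke the corresponding statement for $f$ in $\cb P^3$ from Brugall\'e--Georgieva. The only discrepancy is the citation: the input you need is \cite[Proposition 4.4]{bg2016}, not Proposition 4.2 (which is the statement that $s_{\cb P^3}(\rb N_1')\neq s_{\cb P^3}(\rb N_2')$).
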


\begin{proof}
Recall that the isotopy class $\rb L_{\tilde f}$ is constructed as follows:
in the local holomorphic coordinates on $N_{\tilde f}$ defining a
real holomorphic splitting $N_{\tilde f}=\ol_{\cb P^1}(2d-k-1)\oplus\ol_{\cb P^1}(2d-k-1)$,
the isotopy class of $\rb L_{\tilde f}$ is the isotopy class of the real part of the degree
$2d-k-2$ line subbundle of $N_{\tilde f}$ whose real fibers rotate positively
with respect to the positive basis of $\rb N_{\tilde f}$.
Note that there are two holomorphic real line subbundles of $N_{\tilde f}$ of degree $2d-k-2$
depending on whether the real part of a fiber rotates in $\rb N_{\tilde f}$ positively.
Since $N_{\tilde f}'=N_{f}'\otimes\ol_{\cb P^1}(-z)$ and
$N_{\tilde f}=N_{f}\otimes\ol_{\cb P^1}(-z)$, we can get
that the fibers of $\rb N_{f}'$ rotate positively with respect to the positive basis of $\rb N_{f}$
if and only if the fibers of $\rb N_{\tilde f}'$ rotate positively with
respect to the positive basis of $\rb N_{\tilde f}$.
From \cite[Proposition 4.4]{bg2016}, we know the holomorphic real line subbundle
$\rb N_{f}'$ of $\rb N_{f}$ realises the real isotopy class $\rb L_{f}$ if and only if $a>b$.
\end{proof}

\begin{lem}\label{subsec3.1-lem-2}
Let $f:\cb P^1\to \cb P^3$ be a real algebraic immersion, whose image is contained in $Q$,
passing through $x_0$ such that it has bidegree $(a,b)$ in the positive basis with $a\neq b$.
Suppose $z=f^{-1}(x_0)\subset\cb P^1$ contains $k$ points, and $k$ is even.
Let $\tilde f$ be the strict transform of $f$ in $Y$.
Then
$$
s_Y(\rb N_{\tilde f}')=m_{\wt Q}(\tilde f(\cb P^1))+b+\frac{k}2.
$$
\end{lem}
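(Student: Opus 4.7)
The strategy is to reduce the statement to the analogous computation in $\cb P^3$, carried out as \cite[Lemma 4.5]{bg2016}, and then to quantify the extra contribution $k/2$ produced by the blow-up $\pi\colon Y\to\cb P^3$.

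First, by Proposition \ref{subsec3.1-prop-2} (applied to $(l_1,l_2)$ as given when $a>b$, and to the basis with $l_1,l_2$ interchanged when $a<b$), the real holomorphic line subbundle $\rb N_{\tilde f}'$ of $\rb N_{\tilde f}$ realises the isotopy class $\rb L_{\tilde f}$. Consequently $s_Y(\rb N_{\tilde f}')=S_Y(\tilde f)$, the spinor state of $\tilde f$ with respect to $\ssk\#\bar\ssk$. The $\cb P^3$-version \cite[Lemma 4.5]{bg2016} asserts that, under exactly the same bidegree hypothesis, $s_{\cb P^3}(\rb N_f')=m_Q(f(\cb P^1))+b$. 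Thus it suffices to prove the two parities
\[
m_{\wt Q}(\tilde f(\cb P^1))\equiv m_Q(f(\cb P^1))\pmod 2,\qquad s_Y(\rb N_{\tilde f}')\equiv s_{\cb P^3}(\rb N_f')+\tfrac{k}{2}\pmod 2.
\]

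The first parity is essentially cosmetic. For a generic real immersion $f$ with image in $Q$ we may assume $x_0$ is a smooth point of $f(\cb P^1)$, so none of the nodes of $f(\cb P^1)$ lies on the exceptional divisor $E$. Since $\pi$ is an isomorphism away from $E$, every node of $f(\cb P^1)$ lifts to a node of $\tilde f(\cb P^1)$ of the same type (real isolated, real non-isolated, or complex conjugated pair), and no new nodes are created. Hence even equality $m_{\wt Q}(\tilde f(\cb P^1))=m_Q(f(\cb P^1))$ holds.

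The second parity is the heart of the lemma and the place I expect the real work. The plan is to choose a loop $(\sigma_1,\sigma_2,\sigma_3)$ of positive orthonormal frames of $f^*_{|\rb P^1}T\rb P^3$ and a parallel loop $(\tilde\sigma_1,\tilde\sigma_2,\tilde\sigma_3)$ of $\tilde f^*_{|\rb P^1}T\rb Y$, constructed so that they agree through the identification provided by $\pi$ on the complement of small loops around each point of $\rb z=\tilde f^{-1}(E)\cap\rb P^1$. Inside each such loop one compares the two trivialisations: crossing $\rb E\cong\rb P^2\subset\rb Y$ introduces a definite half-rotation relative to $\pi^*T\rb P^3$, arising from the standard description of $\rb Y$ as a connected sum along the sphere bounding a ball at $x_0$ and from the gluing recipe for the spin structure $\ssk\#\bar\ssk$. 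Each real preimage therefore contributes one elementary generator of $\pi_1(SO_3)$ to the difference of the two loops in the frame bundles. A separate but analogous monodromy analysis around each complex conjugated pair in $z$ provides the matching contribution from the complex pairs. Summing yields a total discrepancy of $k/2\pmod 2$ between the lifts to the corresponding $\mathrm{Spin}$-bundles, which is exactly what is needed.

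The main obstacle is this last step: making the comparison of the two frame loops fully rigorous and extracting the correction $\tfrac{k}{2}\pmod 2$ — not $\#\rb z\pmod 2$ or any other natural candidate — requires a careful book-keeping of how $\ssk\#\bar\ssk$ restricts to tubular neighbourhoods of each component of $\rb z$, together with how real and complex conjugated preimages of $x_0$ interact with the real structure of $N_{\tilde f}$.
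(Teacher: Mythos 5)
Your overall strategy (compare the spinor state upstairs with one computed in $\cb P^3$ and isolate a blow-up correction) points in the right direction, but both of the intermediate parities you reduce to are unjustified, and one rests on a false premise. Since $z=f^{-1}(x_0)$ consists of $k\geq 2$ points and $f$ is an immersion, the image $f(\cb P^1)\subset Q$ has an ordinary multiple point of order $k$ at $x_0$; your assumption that ``we may assume $x_0$ is a smooth point of $f(\cb P^1)$'' contradicts the hypotheses, and for $k>2$ the curve $f(\cb P^1)$ is not even nodal, so the quantity $m_Q(f(\cb P^1))$ and the cited formula $s_{\cb P^3}(\rb N_f')=m_Q(f(\cb P^1))+b$ from \cite{bg2016} do not apply as stated. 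Blowing up resolves this multiple point, so the node count of $\tilde f(\cb P^1)$ differs from the delta invariant of $f(\cb P^1)$ by $\frac{k(k-1)}{2}$; this discrepancy is precisely where the correction term $\frac{k}{2}$ originates, and it is lost if one asserts $m_{\wt Q}(\tilde f(\cb P^1))=m_Q(f(\cb P^1))$ with ``no new nodes created.''

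The second and central step --- that the two frame loops differ by exactly $\frac{k}{2}$ elementary generators of $\pi_1(SO_3(\rb))$ --- is exactly the content of the lemma, and your proposal defers it rather than proving it (as you acknowledge, a naive count of half-twists would give $\#\rb z \bmod 2$, not $\frac k2$). The paper's proof carries this out concretely: it modifies the frame loop by inserting alternating half-twists at the $r=\#\rb z$ real points of $\tilde f(\rb P^1)\cap\rb E$, degenerates $\rb Y$ to the connected sum so that $\tilde f(\rb P^1)$ splits into $f(\rb P^1)$ together with $r$ real lines of $\overline{\rb P}^3$ through $x_0$, shows that these lines contribute $\frac r2 \bmod 2$ in pairs, and then computes $s_{\cb P^3}(\rb N_f')$ directly by smoothing the nodes \emph{and} the order-$r$ real singular point at $x_0$, using the adjunction formula in $\wt Q$ (which gives $(a-1)(b-1)-\frac{k^2}{2}+\frac{k}{2}$ nodes for $\tilde f(\cb P^1)$) to convert the hyperbolic node count into $m_{\wt Q}(\tilde f(\cb P^1))+\frac{k}{2}$; the two $\frac r2$ contributions then cancel because $r$ is even. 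Without an argument of this kind your proof is incomplete at its key step.
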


\begin{proof}
The idea of the proof is similar to the proofs of
\cite[Lemma 2.1, Corollary 4.3]{bg2016} and \cite[Theorem 4.4]{wel2005b}.
Let $(\sigma_1(x),\sigma_2(x),\sigma_3(x))_{x\in\tilde f(\rb P^1)}$ be a loop in the
$SO_3(\rb)$-principal bundle $R_Y$ of orthonormal frames of $T\rb Y$ constructed in
Section $\ref{subsec2.2}$. Fix an orientation of $\rb P^1$ which induces an orientation
of the normal bundle $\rb N_{\tilde f}$.
Assume $\tilde f(\rb P^1)\cap\rb E=\{p_1,p_2,\ldots,p_r\}$.
Since $k$ is even, $r$ must be even too. Construct a loop
$(\tilde\sigma_1(x),\tilde\sigma_2(x),\tilde\sigma_3(x))_{x\in\tilde f(\rb P^1)}$
of $R_Y$ as the concatenation of the $2r$ following paths.
For the convenience, let $p_0=p_r$ be the same point.
When $i=0$, first, choose $(\sigma_1(x),\sigma_2(x),\sigma_3(x))_{x\in[p_{2i},p_{2i+1)}]}$.
Then construct a path completely included in the fiber of $R_Y$ over $p_{2i+1}$.
This path is obtained from $(\sigma_1(p_{2i+1}),\sigma_2(p_{2i+1}),\sigma_3(p_{2i+1}))$ by having this
frame turning of half a twist in the positive direction around the axis generated by $\sigma_1(p_{2i+1})$.
Note that the end point of this path is the frame $(\sigma_1(p_{2i+1}),-\sigma_2(p_{2i+1}),-\sigma_3(p_{2i+1}))$.
Next, the piece is chosen to be the path $(\sigma_1(x),-\sigma_2(x),-\sigma_3(x))_{x\in[p_{2i+1},p_{2i+2}]}$.
Last, we construct a path completely included in the fiber of $R_Y$ over $p_{2i+2}$.
This piece is obtained from $(\sigma_1(p_{2i+2}),-\sigma_2(p_{2i+2}),-\sigma_3(p_{2i+2}))$ by
having this frame turning of half a twist in the negative direction around the axis generated by $\sigma_1(p_{2i+2})$.
Note that the end point of this frame is $(\sigma_1(p_{2i+2}),\sigma_2(p_{2i+2}),\sigma_3(p_{2i+2}))$.
For $i=1,\ldots,\frac{r}2-1$, we just repeat the above construction. Finally,
we constructed a loop in $R_Y$ which is homotopic to
$(\sigma_1(x),\sigma_2(x),\sigma_3(x))_{x\in\tilde f(\rb P^1)}$.
$\rb Y=\rb P^3\#\overline{\rb P}^3$ is the connected sum of $\rb P^3$
and $\overline{\rb P}^3$ in a neighborhood of $x_0$.
Let the radius of the ball used to construct the connected sum
converge to zero, then the curve $\tilde f(\rb P^1)$ will degenerate to the union of the curve
$f(\rb P^1)$ and $r$ lines $\rb D_1$,...,$\rb D_{r}$ of $\overline{\rb P}^3$ passing through $x_0$.
The loop $(\tilde\sigma_1(x),\tilde\sigma_2(x),\tilde\sigma_3(x))_{x\in\tilde f(\rb P^1)}$ degenerates
to the union of a loop $(\tilde\sigma_1,\tilde\sigma_2,\tilde\sigma_3)_{f(\rb P^1)}$ of $R_{\cb P^3}$
and $r$ loops $(\tilde\sigma_1,\tilde\sigma_2,\tilde\sigma_3)_{\rb D_1}$, ...,
$(\tilde\sigma_1,\tilde\sigma_2,\tilde\sigma_3)_{\rb D_{r}}$ of $\overline R_{\cb P^3}$.
From the above construction, we know the loops
$(\tilde\sigma_1,\tilde\sigma_2,\tilde\sigma_3)_{\rb D_{2i}}$ differ from
$(\tilde\sigma_1,\tilde\sigma_2,\tilde\sigma_3)_{\rb D_{2i+1}}$ by a non-trivial loop in a
fiber of $\overline P_{\cb P^3}$. So
$s_{\overline{\cb P}^3}(\rb L_{\rb D_{2i}})+s_{\overline{\cb P}^3}(\rb L_{\rb D_{2i+1}})$ is odd.
Therefore,
$$
\Sigma_{i=1}^{r}s_{\overline{\cb P}^3}(\rb L_{\rb D_i})=\frac{r}{2}\mod2.
$$
\begin{figure}[h!]
\begin{center}
\begin{tabular}{ccc}
\includegraphics[width=3cm, angle=0]{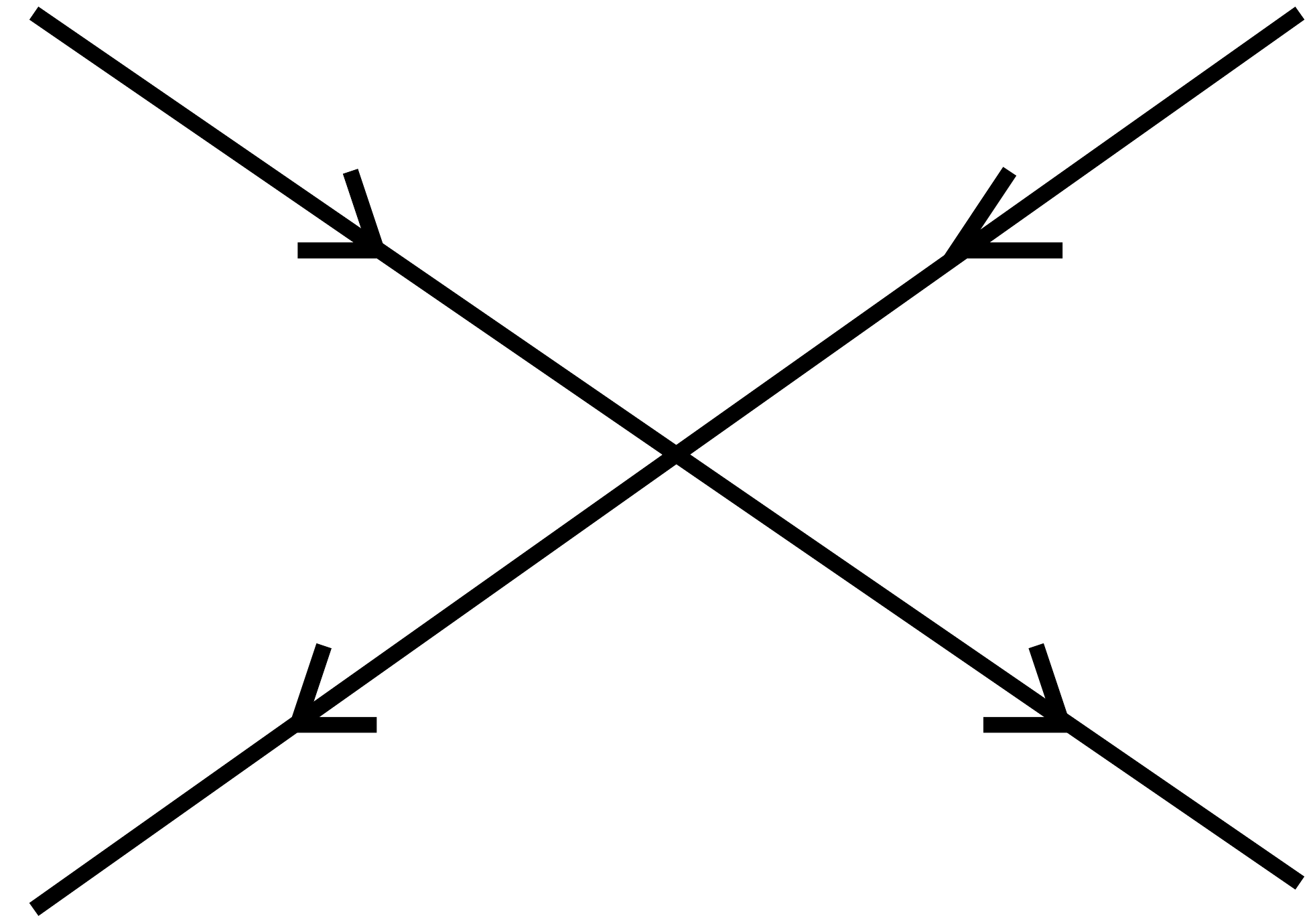}
&
\hspace{4ex}
\put(0, 30){$\dashrightarrow$}
\hspace{4ex}
&
\includegraphics[width=3cm, angle=0]{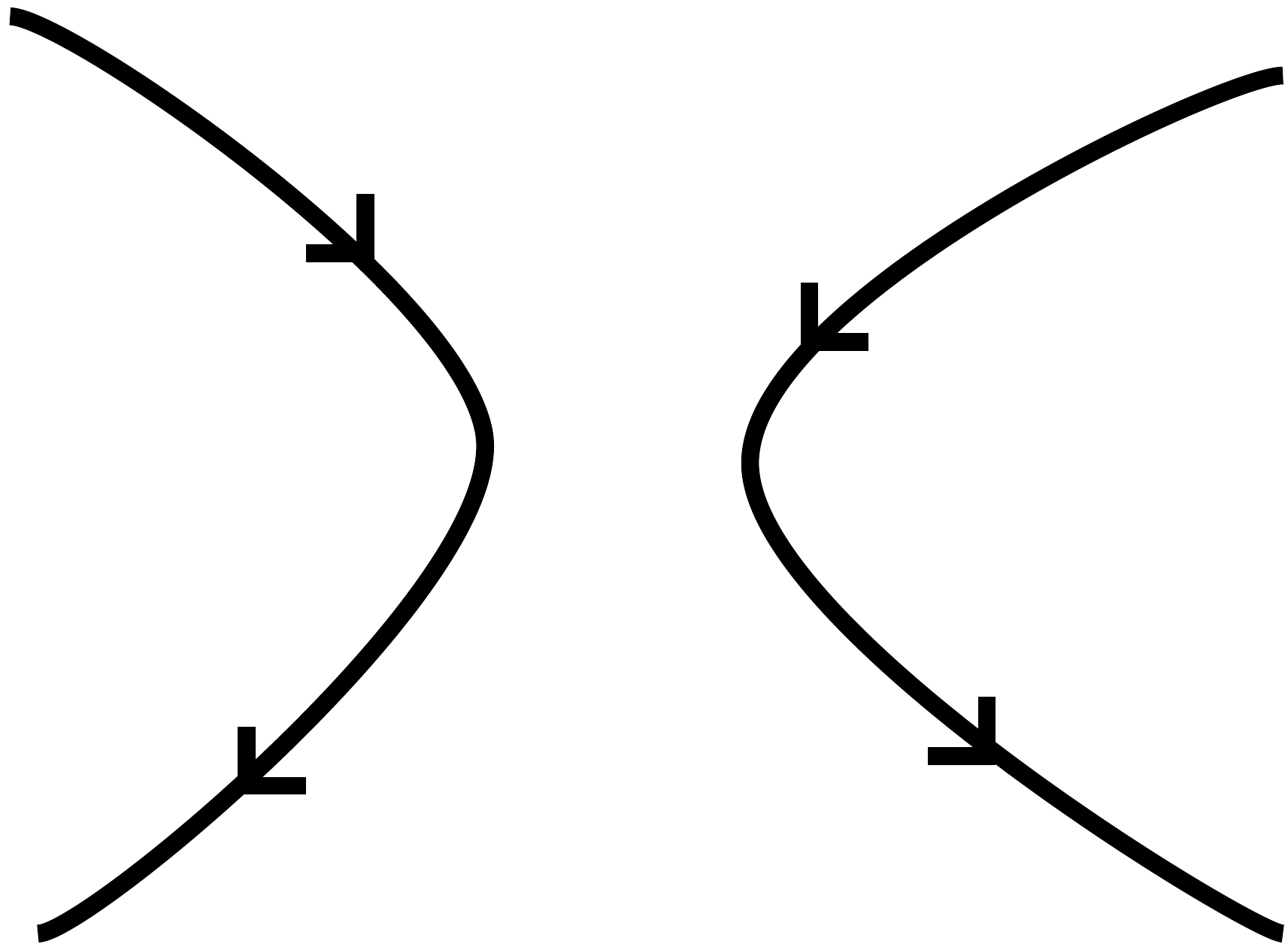}
\end{tabular}
\end{center}
\caption{Smoothing the node}
\label{figure1}
\end{figure}
For the loop $(\tilde\sigma_1,\tilde\sigma_2,\tilde\sigma_3)_{f(\rb P^1)}$,
we first smooth each node of $f(\rb P^1)$ as Figure $\ref{figure1}$, then smooth the
order $r$ singular point of $f(\rb P^1)$ at $x_0$ as Figure $\ref{figure2}$.
\begin{figure}[h!]
\begin{center}
\begin{tabular}{ccccccc}
\includegraphics[width=2.5cm, angle=0]{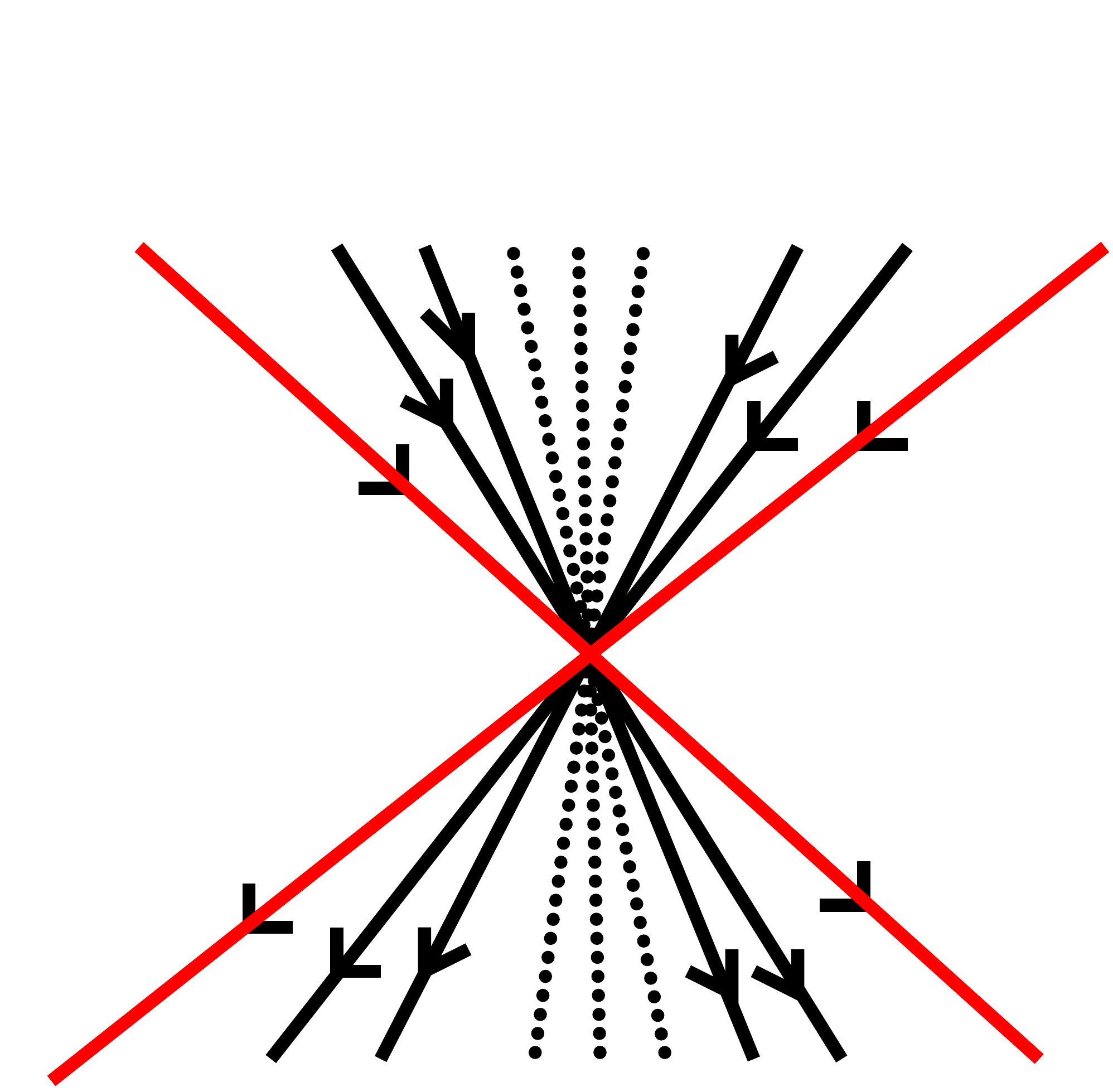}
&
\hspace{0ex}
\put(0, 25){$\dashrightarrow$}
\hspace{0ex}
&
\includegraphics[width=2.5cm, angle=0]{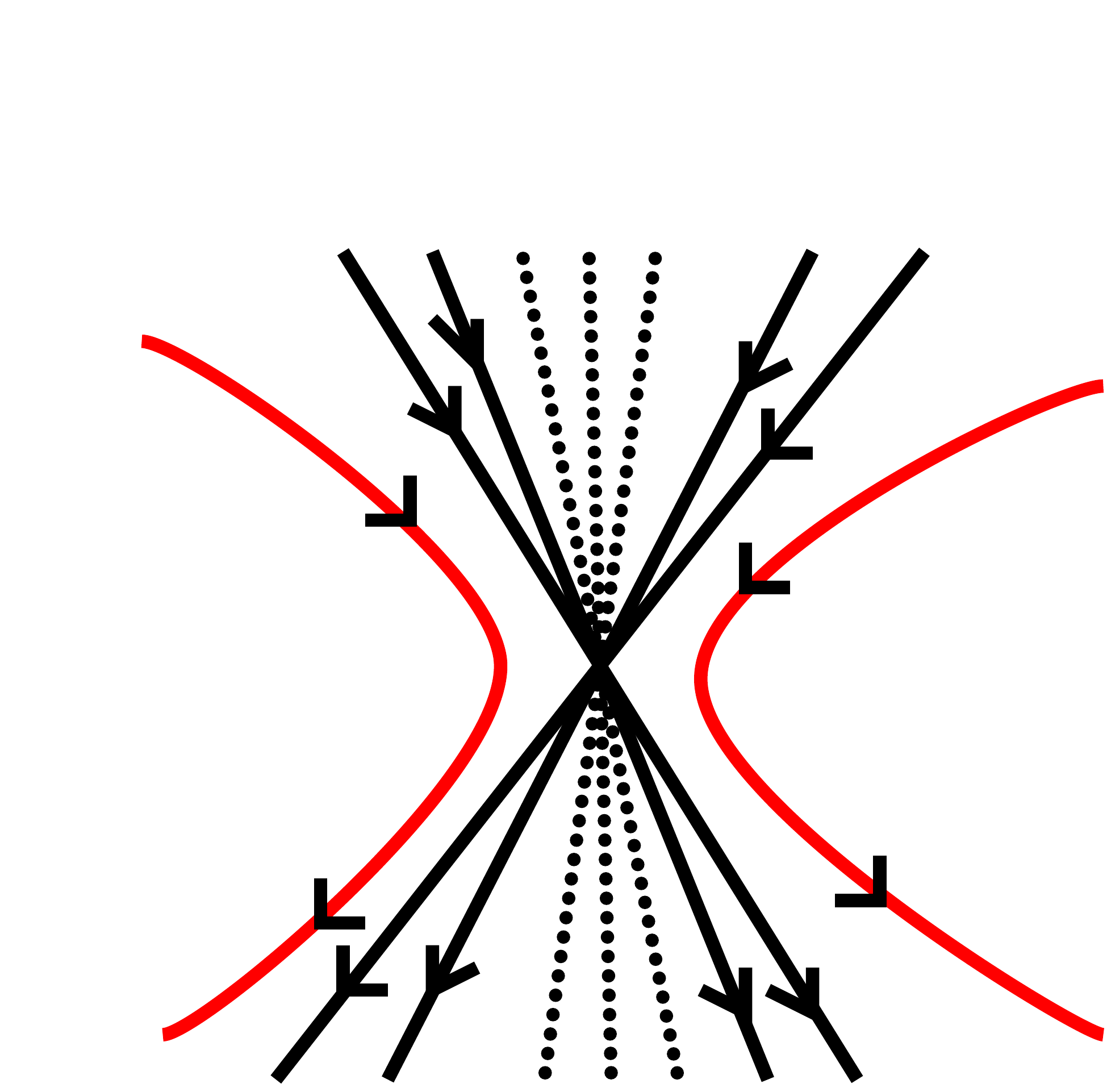}
&
\hspace{0ex}
\put(0, 25){$\dashrightarrow$}
\hspace{0ex}
&
\includegraphics[width=2.5cm, angle=0]{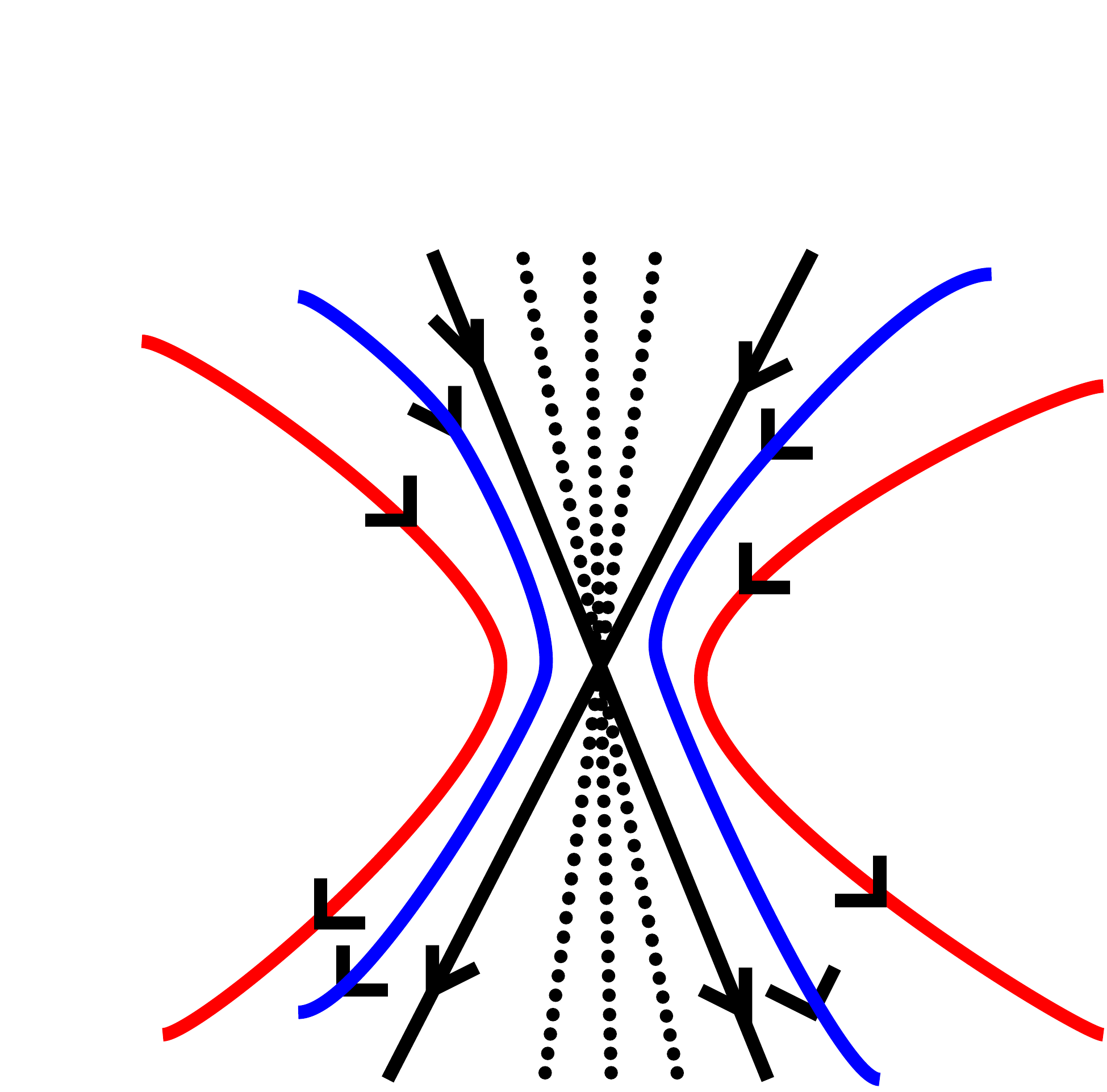}
&
\hspace{0ex}
\put(0, 25){$\dashrightarrow$}
\hspace{0ex}
&
\includegraphics[width=2.5cm, angle=0]{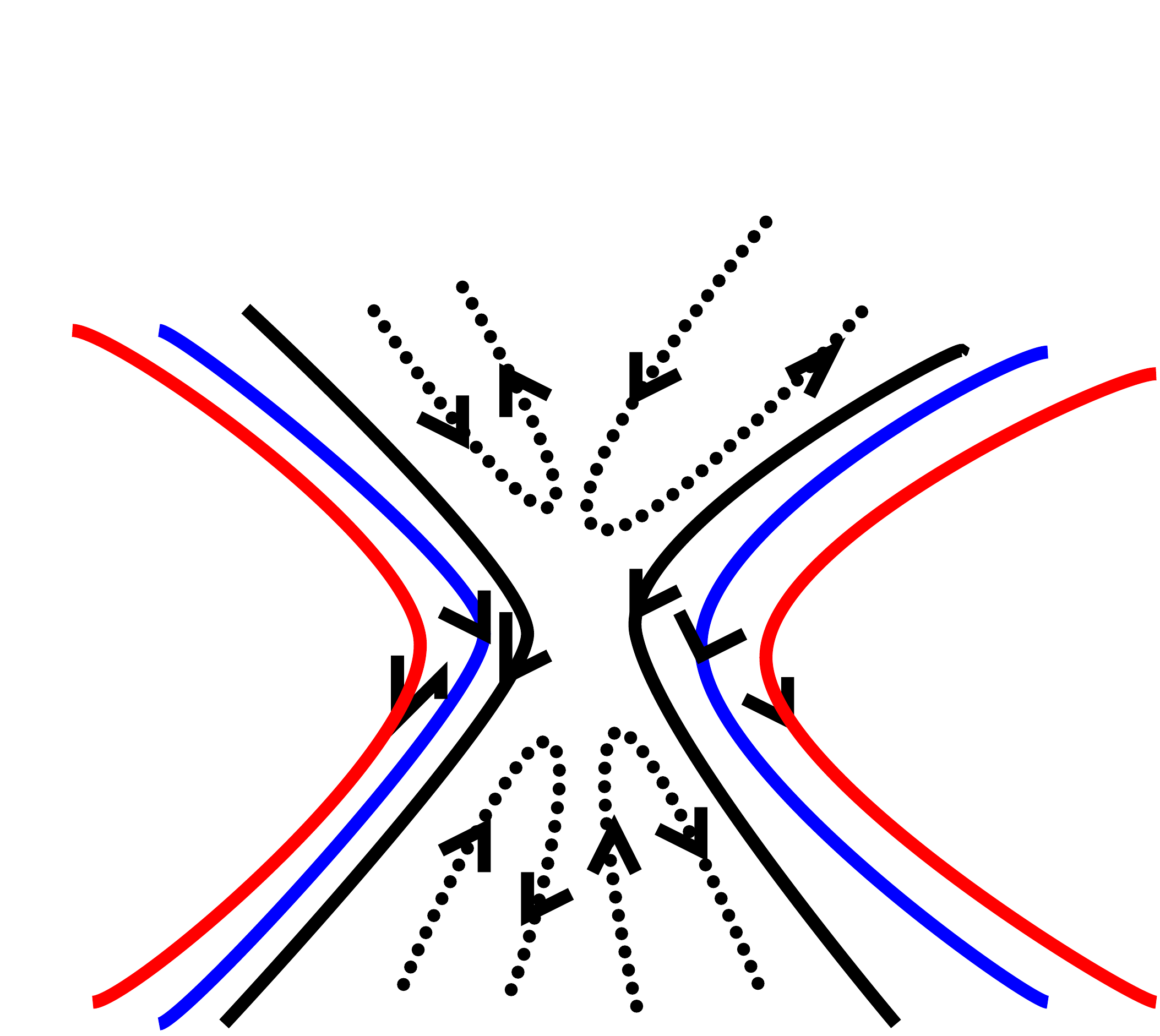}
\\
\footnotesize{(a). order $r$}
&
\hspace{0ex}
&
\footnotesize{(b). order $r-2$}
&
\hspace{0ex}
&
\footnotesize{(c). order $r-4$}
&
\hspace{0ex}
&
\footnotesize{(d). smoothing}

\end{tabular}
\end{center}
\caption{Smoothing the order $r$ singular point}
\label{figure2}
\end{figure}
We get a collection $\gamma$ of $n$ disjoint oriented circles embedded in $\rb P^1\times\rb P^1$.
And
$$
n=1+t+\frac{r}{2}\mod2,
$$
where $t$ is the number of hyperbolic nodes of $\tilde f(\cb P^1)$.
The loops of $\gamma$ do not intersect with each other, so the non-trivial loops
can only realize the same class $p[\rb l_1]+q[\rb l_2]$ with $p$ and $q$ relatively prime.
The loop realising the trivial class defines a non-trivial loop in $\pi_1(SO_3(\rb))$,
and the non-trivial loop defines $q$ times the non-trivial loop in $\pi_1(SO_3(\rb))$,
because $(l_1,l_2)$ is a positive basis.
Suppose $f(\rb P^1)=mp[\rb l_1]+mq[\rb l_2]$.
So $s_{\cb P^3}(\rb N_f')=n-m+qm\mod2$.
It means
$$
s_{\cb P^3}(\rb N_f')=t+1+\frac{r}{2}-m+qm\mod2.
$$
From the adjunction formula, $\tilde f(\cb P^1)$ has exactly $(a-1)(b-1)-\frac{k^2}2+\frac{k}2$ nodes
which contains $t$ hyperbolic nodes and $m_{\wt Q}(\tilde f(\cb P^1))$ elliptic nodes.
Thus,
\begin{equation*}
\begin{aligned}
s_{\cb P^3}(\rb N_f')=&(a-1)(b-1)-\frac{k^2}{2}+\frac{k}{2}\\
&+m_{\wt Q}(\tilde f(\cb P^1))+1+\frac{r}{2}-m+qm\mod2.
\end{aligned}
\end{equation*}
Note that $k$ is even and $(mp,mq)=(a,b)\mod2$.
So $m$ is even if and only if both $a$ and $b$ are even.
Hence
$$
s_{\cb P^3}(\rb N_f')=\frac{k}{2}+m_{\wt Q}(\tilde f(\cb P^1))+\frac{r}{2}+b\mod2.
$$
Since
$s_Y(\rb N_{\tilde f}')=s_{\cb P^3}(\rb N_f')+\Sigma_{i=1}^{r}s_{\overline{\cb P}^3}(\rb L_{\rb D_i})$
and $r$ is even, we get
$$
s_Y(\rb N_{\tilde f}')=m_{\wt Q}(\tilde f(\cb P^1))+b+\frac{k}2.
$$
\end{proof}

\section{Proof of the main results}

Denote by $\ml^*_{0,n}(X,d)$ the space of genus $0$ stable maps
$$
f:(\cb P^1,x_1,\ldots,x_n)\to X
$$
from $\cb P^1$ with $n$ marked points whose images represent the class $d$ in $X$.
Note that the stable maps are considered up to reparametrization.
There is a natural map:
\begin{equation*}
\begin{aligned}
ev:\ml^*_{0,n}(X,d)&\to ~~~X^n\\
f~&\mapsto(f(x_1),\ldots,f(x_n)),
\end{aligned}
\end{equation*}
which is called the evaluation map.
Let $\Omega_n\subset\wt C_4^n\subset(\cb P^3\#\overline{\cb P}^3\setminus E)^n$ be the
set of configurations of $n$ distinct points on $\wt C_4$.

Let $\wt Q\subset\wt\ql$ be the blow-up of a quadric,
$a\in\{0,\ldots,d\}$ and $\undl y\in\Omega_{2d-k-1}$. Denote by
$\overline\cl_{\wt Q}(\undl y,a,b,k)$ the set of stable maps $f:(C,x_1,\ldots,x_{2d-k-1})\to\wt Q$,
where $C$ is a connected nodal curve of arithmetic genus $0$,
such that $f(C)$ represents the class $(a,b,k)$ and $f(\{x_1,\ldots,x_{2d-k-1}\})=\undl y$.
\begin{prop}\label{subsec4.1-prop-1}
There exists a dense open subset $\Omega'_{2d-k-1}\subset\Omega_{2d-k-1}$
such that for any $a\in\{0,\ldots,d\}$, $\undl y\in\Omega'_{2d-k-1}$,
and for every stable map $(f:(C,x_1,\ldots,x_{2d-k-1})\to\wt Q)\in\overline\cl_{\wt Q}(\undl y,a,b,k)$,
we have:
\begin{itemize}
  \item $C$ is non-singular,
  \item $f$ is an immersion.
\end{itemize}
For any $\undl y\in\Omega'_{2d-k-1}$, $\wt Q$ contains exactly
%$GW_{0,2d-k-1}(\wt{Q},(a,d-a,k))$
$\langle[pt],\ldots,[pt]\rangle^{\wt Q}_{0,(a,b,k)}$ rational curves of class
$(a,d-a,k)$ and passing through $\undl y$.
\end{prop}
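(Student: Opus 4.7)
The plan is to establish the two bullet points and the count simultaneously, in two steps. First, standard transversality will give a dense open subset $U\subset \wt Q^{2d-k-1}$ over which the fibre of the evaluation map $ev$ is reduced and consists of immersions from smooth rational domains, its cardinality then being $\langle[pt],\ldots,[pt]\rangle^{\wt Q}_{0,(a,d-a,k)}$ by definition of the Gromov-Witten invariant. Second, I show that $U\cap \wt C_4^{2d-k-1}$ is dense in the irreducible variety $\wt C_4^{2d-k-1}$, and take $\Omega'_{2d-k-1}$ to be any dense open subset of this intersection.

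For the first step, the expected dimension of $\ml^*_{0,2d-k-1}(\wt Q,(a,d-a,k))$ equals $c_1(\wt Q)\cdot(a,d-a,k)+(\dim \wt Q-3)+(2d-k-1)=2(2d-k-1)=\dim \wt Q^{2d-k-1}$, so $ev$ is generically finite. The set $U$ is defined as the complement in $\wt Q^{2d-k-1}$ of the images under $ev$ of: the boundary of the Deligne-Mumford compactification $\overline{\ml}^*_{0,2d-k-1}(\wt Q,(a,d-a,k))$, the locus within the main stratum where $f$ is not an immersion, and the ramification locus of $ev$. Each of these images is a proper closed subvariety of $\wt Q^{2d-k-1}$ by standard dimension counts for stable maps to smooth rational surfaces, exactly as carried out for $\cb P^1\times \cb P^1$ in \cite{bg2016}.

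For the second step, irreducibility of $\wt C_4^{2d-k-1}$ reduces density of $U\cap \wt C_4^{2d-k-1}$ to exhibiting a single configuration in this intersection. Choose an immersion $f_0:\cb P^1\to \wt Q$ of class $(a,d-a,k)$ whose image meets $\wt C_4$ transversally; the intersection $f_0(\cb P^1)\cap \wt C_4$ contains $(a,d-a,k)\cdot[\wt C_4]=2a+2(d-a)-k=2d-k$ distinct points, so one may take $\undl y_0$ to be any $2d-k-1$ of them. By adjunction the normal bundle $N_{f_0(\cb P^1)/\wt Q}$ has degree $2d-k-2$, so its twist by the divisor $-\undl y_0$ has degree $-1$ and no sections; hence $f_0$ is an isolated and transverse point of $ev^{-1}(\undl y_0)$.

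The main obstacle is to upgrade this local statement about $f_0$ to the global statement $\undl y_0\in U$, namely that \emph{every} element of $ev^{-1}(\undl y_0)$ is a smooth immersion contributing transversally. Equivalently, $\undl y_0$ must lie outside each of the three bad loci excised in the first step. Each bad locus is a proper closed subvariety of $\wt Q^{2d-k-1}$, so the issue is whether $\wt C_4^{2d-k-1}$ happens to be contained in one of them; a further dimension count using $[\wt C_4]=(2,2,1)$, together with the fact that no irreducible component of a boundary stratum has image of dimension as large as $2d-k-1$ when pulled back to $\wt C_4^{2d-k-1}$, rules this out. If any residual bad locus survives this analysis, one perturbs $f_0$ within the $(2d-k-1)$-dimensional family of immersed rational curves of class $(a,d-a,k)$ to relocate $\undl y_0$ into the genuine dense open subset of $\wt C_4^{2d-k-1}$ where both bullet points and the count hold.
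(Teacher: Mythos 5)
Your proposal correctly isolates the real difficulty, but does not resolve it. The configurations $\undl y$ in this proposition are forced to lie on $\wt C_4^{2d-k-1}$, which has codimension $2d-k-1$ in $\wt Q^{2d-k-1}$; hence the complement of your set $U$, being merely a proper closed subvariety of $\wt Q^{2d-k-1}$, could a priori contain all of $\wt C_4^{2d-k-1}$. You acknowledge this and propose to rule it out by ``a further dimension count,'' but that count is exactly the content of the proposition and is never carried out. Your two concrete attempts to fill the gap both fail: (i) exhibiting one immersion $f_0$ with $H^0(N_{f_0}(-\undl y_0))=0$ shows only that $f_0$ is an isolated transverse point of $ev^{-1}(\undl y_0)$, not that \emph{every} element of that fibre is a smooth immersion, which is what membership in $U$ requires; (ii) perturbing $f_0$ within its $(2d-k-1)$-dimensional family again only produces configurations through which \emph{some} good curve passes, and says nothing about the other curves through the same configuration.

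The paper closes this gap by exploiting the genus of $\wt C_4$. Let $\hl$ be the family of irreducible nodal rational curves of class $(a,d-a,k)$, of dimension $2d-k-1$ inside the linear system, and let $\overline\hl$ be its closure, so $\dim(\overline\hl\setminus\hl)\leq 2d-k-2$. Every curve in $\overline\hl$ cuts $\wt C_4$ in a divisor of the fixed degree $2d-k$ linear system $\fl=|(al_1+(d-a)l_2-kE_{\wt Q})|_{\wt C_4}|$, giving a map $\phi:\overline\hl\to\fl$; and because $\wt C_4$ has genus $1$, a divisor in $\fl$ is determined by $2d-k-1$ of its points, so a configuration $\undl y\in\Omega_{2d-k-1}$ determines an element $[\undl y]\in\fl$. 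Since $\dim\phi(\overline\hl\setminus\hl)\leq 2d-k-2<2d-k-1=\dim\fl$, the configurations with $[\undl y]\notin\phi(\overline\hl\setminus\hl)$ form the desired dense open subset, and for these every stable map in the fibre has smooth domain and is an immersion; the count then follows from the paper's Lemma 4.4 (immersion if and only if regular for $ev$). Any correct completion of your argument would need this ingredient, or an equivalent one, and it is absent from your proposal.
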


\begin{proof}
Let $\hl$ be the set of irreducible nodal rational curves in $\wt Q$ of class $(a,d-a,k)$.
By the proof of Lemma $\ref{subsec4.1-lem-1}$,
we know $H^1(\cb P^1,f^*T\wt Q)=0$. Therefore,
$\hl$ is a quasi-projective subvariety of dimension $2d-k-1$
of the linear system $|al_1+(d-a)l_2-kE_{\wt Q}|$.
Let $\overline\hl$ be the Zariski closure of $\hl$ and
$\fl=|(al_1+(d-a)l_2-kE_{\wt Q})|_{\wt C_4}|$. In $\wt Q$,
$\wt C_4$ represents class $2[l_1]+2[l_2]-[E_{\wt Q}]$, so the degree
of linear system $\fl$ is $2d-k$. As the genus of $\wt C_4$ is $1$,
every element of $\fl$ is determined by its $2d-k-1$ points.
Therefore, an element $\undl y$ of $\Omega_{2d-k-1}$ induces an element
$[\undl y]$ of $\fl$. $\wt C_4$ is not a component of any curve in $\overline\hl$,
we can define a map
\begin{equation*}
\begin{aligned}
\phi:\overline\hl&\to~\fl\\
C&\mapsto C\cap\wt C_4.
\end{aligned}
\end{equation*}
Note that $\dim\phi(\overline\hl\setminus\hl)\leq2d-k-2$.
The image of every element $f\in\overline\cl_{\wt Q}(\undl y,a,b,k)$,
with $\undl y\in\Omega_{2d-k-1}$
such that $[\undl y]\notin\phi(\overline\hl\setminus\hl)$,
is a nodal irreducible rational curve, $i.e.$ $C=\cb P^1$ and $f$ is an immersion.
The rest of the proof follows from Lemma $\ref{subsec4.1-lem-1}$.
\end{proof}

\begin{lem}\label{subsec4.1-lem-1}
A map $f\in\ml_{0,2d-k-1}^*(\wt Q,(a,d-a,k))$ is regular for the
corresponding evaluation map $ev$ if and only if it is an immersion.
\end{lem}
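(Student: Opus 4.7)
The plan is to compute the derivative $d\,ev$ at $f$ in terms of the normal sheaf $N_f=\operatorname{coker}(df\colon T\cb P^1\to f^*T\wt Q)$ on $\cb P^1$, and then to read off surjectivity of $d\,ev$ from a cohomological condition that singles out immersions via the ramification divisor of $f$.

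With $n=2d-k-1$, I would identify the Zariski tangent space to $\ml^*_{0,n}(\wt Q,(a,d-a,k))$ at $[\cb P^1,x_1,\ldots,x_n,f]$ with the quotient
\[
T=\bigl(H^0(\cb P^1,f^*T\wt Q)\oplus\textstyle\bigoplus_i T_{x_i}\cb P^1\bigr)\big/H^0(T\cb P^1),
\]
and note that $d\,ev$ sends $[(s,v_1,\ldots,v_n)]$ to $(s(x_i)+df_{x_i}(v_i))_i\in\bigoplus_i T_{f(x_i)}\wt Q$. Write $d\,ev=\operatorname{ev}_0+A$ with $\operatorname{ev}_0(s)=(s(x_i))_i$ and $A((v_i))=(df_{x_i}(v_i))_i$. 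Since $H^1(T\cb P^1)=H^1(\ol_{\cb P^1}(2))=0$ the map $H^0(f^*T\wt Q)\to H^0(N_f)$ is surjective, and a short diagram chase reduces surjectivity of $d\,ev$ to surjectivity of the evaluation
\[
\operatorname{ev}_N\colon H^0(\cb P^1,N_f)\longrightarrow\bigoplus_{i=1}^n N_f|_{x_i},
\]
where the fibre $N_f|_{x_i}$ is understood as $T_{f(x_i)}\wt Q/df_{x_i}(T_{x_i}\cb P^1)$.

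To analyse $\operatorname{ev}_N$ I would split $N_f=T_{\operatorname{tors}}\oplus\bar N_f$ into its torsion part, supported on the ramification divisor $R=\sum_p e_p[p]$ of $f$, and its locally free quotient $\bar N_f$, which is a line bundle of degree $n-1-\deg R$. For $x_i$ outside $R$ the torsion part contributes $0$ to $N_f|_{x_i}$, while at $x_i\in R$ the torsion global sections already surject onto the extra $e_{x_i}$ dimensions of $N_f|_{x_i}$. In either case the remaining constraint on $\operatorname{ev}_N$ reduces to surjectivity of the line-bundle evaluation $H^0(\bar N_f)\to\bigoplus_i\bar N_f|_{x_i}$, and a Serre-duality calculation gives $\dim H^1(\bar N_f(-\sum_i x_i))=\deg R$, so this evaluation is surjective if and only if $\deg R=0$, that is, $f$ is an immersion.

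The step I expect to require the most care is the case when a marked point $x_i$ actually lies on $R$: there the ``natural'' sequence $0\to N_f(-\sum_i x_i)\to N_f\to\bigoplus_i N_f|_{x_i}\to 0$ fails to be exact, because $N_f(-x_i)\to N_f$ is not injective on the torsion summand. I would circumvent this by performing the line-bundle computation with $\bar N_f$ only, for which the corresponding sequence is genuinely exact, and separately verifying the surjectivity of $H^0(T_{\operatorname{tors}})\to T_{\operatorname{tors}}|_{x_i}$ for each $x_i\in R$; this bookkeeping matches the fibre-dimension jump of $N_f|_{x_i}$ and yields both implications of the lemma uniformly.
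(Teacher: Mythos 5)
Your proposal is correct and follows essentially the same route as the paper: both identify the cokernel of $d\,ev$ with the cokernel of the section-evaluation map on the normal sheaf $\nl_f$ and then conclude from the fact that $\deg\nl_f=2d-k-2$ while $2d-k-1$ points are imposed. The only (cosmetic) divergence is in the converse direction, where the paper twists $\nl_f$ by $\ol_{\cb P^1}(-\undl y)$ and pins down $\nl_f\simeq\ol_{\cb P^1}(2d-k-2)$ via adjunction, whereas you split off the torsion supported on the ramification divisor and work with the locally free quotient --- a slightly more careful bookkeeping of the same computation (note only that the fibre of $\nl_f$ at a ramification point jumps by one dimension, not by $e_{x_i}$).
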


\begin{proof}
From the proof of \cite[Lemma 1.3]{wel2005b}, we know the cokernel of $d_{(f,\undl y)}ev$
is identified with the cokernel of the following composition:
\begin{equation}\label{subsec4.1-eq-1}
\begin{aligned}
H^0(\cb P^1;f^*T\wt Q)~&\to H^0(\cb P^1;\nl_f)&&\to H^0(\cb P^1;\nl_f|_{\undl y})\\
v~~~&\to~~~[v]&&\to~~[v(\undl y)],
\end{aligned}
\end{equation}
where $\nl_f$ is the quotient sheaf,
and the first morphism of equation $(\ref{subsec4.1-eq-1})$ is surjective.
Let $\nl_{f,-\undl y}=\nl_f\otimes\ol_{\cb P^1}(-\undl y)$.
The short exact sequence of sheaves $0\to\nl_{f,-\undl y}\to\nl_f\to\nl_f|_{\undl y}\to0$
implies the long exact sequence
\begin{equation}\label{subsec4.1-eq-2}
\begin{aligned}
\cdots&\to H^0(\cb P^1;\nl_f)\to H^0(\cb P^1;\nl_f|_{\undl y})\to H^1(\cb P^1;\nl_{f,-\undl y})\\
&\to H^1(\cb P^1;\nl_f)\to0\to\cdots.
\end{aligned}
\end{equation}
If $f$ is an immersion, we know $\nl_f\simeq\ol_{\cb P^1}(2d-k-2)$ and $H^1(\cb P^1;\nl_f)=0$.
Actually, from the exact sequence $0\to T\cb P^1\to f^*T\wt Q\to\nl_f\to0$,
we also can get $H^1(\cb P^1;f^*T\wt Q)=0$.
So the cokernel of
$$
d_{(f,\undl y)}ev=H^1(\cb P^1;\nl_{f,-\undl y})=H^1(\cb P^1;\ol_{\cb P^1}(-1))=0.
$$
Now suppose $f$ is a regular point of $d_{(f,\undl y)}ev$, then
$$
H^1(\cb P^1;\nl_{f,-\undl y})=H^1(\cb P^1;\nl_f).
$$
From the cohomology of $\cb P^1$,
we obtain $H^1(\cb P^1;\nl_{f,-\undl y})=H^1(\cb P^1;\nl_f)=0$. Therefore,
$\nl_f\simeq\ol_{\cb P^1}(a)$ with $a\geq2d-k-2$. By the adjunction formula, $a\leq2d-k-2$.
So $\nl_f\simeq\ol_{\cb P^1}(2d-k-2)$, and $f$ is an immersion.
\end{proof}

\begin{rem}\label{subsec4.1-rem-1}
%Let $\overline\cl'(\undl x)$ be the set of connected algebraic curves of arithmetic genus 0
%in $Y$ representing class $d[l]-k[e]$ and passing through $\undl x$.
By using \cite[Lemma $4.3$]{wel2005b}, one can prove,
similar to the proof of \cite[Proposition 5.3]{bg2016}, that the regular values of
$$
ev:\ml^*_{0,2d-k}(\cb P^3\#\overline{\cb P}^3,d[l]-k[e])\to(\cb P^3\#\overline{\cb P}^3)^{2d-k}
$$
contained in $\Omega_{2d-k}$ form a dense open subset
$\Omega'_{2d-k}\subset\Omega_{2d-k}$ and the set
$\overline\cl'(\undl x)$, $\undl x\in\Omega'_{2d-k}$, contains exactly
$\langle[pt],\ldots,[pt]\rangle^{\cb P^3\#\overline{\cb P}^3}_{0,d[l]-k[e]}$
%$:=GW_{0,2d-k}(\cb P^3\#\overline{\cb P}^3,d-k[E])$
elements.
\end{rem}

\begin{proof}[Proof of Theorem $\ref{thm2}$]
Choose $\undl x\in\Omega'_{2d-k}$, where $\Omega'_{2d-k}$
is the set defined in Remark $\ref{subsec4.1-rem-1}$,
and $\undl y\subset\undl x$ a set consisting of $2d-k-1$ points.
Denote by $\ml_a$ the set of blow-up of quadrics in $\wt \ql$ corresponding to
a solution of equation $(\ref{subsec3.1-eq-1})$ which consists of $(d-2a)^2$ elements.
From Lemma $\ref{subsec3.1-lem-1}$ , we have
$$
|\overline\cl'(\undl x)|=\sum_{0\leq a< b}\sum_{\wt Q\in\ml_a}|\overline\cl_{\wt Q}(\undl y,a,b,k)|.
$$
According to Remark $\ref{subsec4.1-rem-1}$,
$|\overline\cl'(\undl x)|=\langle[pt],\ldots,[pt]\rangle^{\cb P^3\#\overline{\cb P^3}}_{0,d[l]-k[e]}$.
We can get every map $(f:\cb P^1\to\cb P^3\#\overline{\cb P}^3)\in\overline\cl'(\undl x)$
is an immersion with the help of \cite[Lemma 1.3, Lemma 4.3]{wel2005a},
for $\undl x$ is a regular value of the corresponding evaluation map $ev$.
By Proposition $\ref{subsec4.1-prop-1}$, we have
$|\overline\cl_{\wt Q}(\undl y,a,b,k)|=\langle[pt],\ldots,[pt]\rangle^{\wt Q}_{0,(a,b,k)}$.
\end{proof}

\begin{proof}[Proof of Theorem $\ref{thm1}$]
Let $C_4$ be a real elliptic curve of degree 4 in $\cb P^3$ and $x_0\in\rb C_4$.
Denote by $Y$ the blow-up of $\cb P^3$ at $x_0$, and by $\wt C_4$ the strict transform of $C_4$.
Let $d\in\zb_{>0}-2\zb$, $k\in2\zb_{\geq0}$ with $d\geq k$.
Choose a real configuration $\undl x\in\Omega'_{2d-k}$ of $2d-k$ points which
contains at least one real point.
From Lemma $\ref{subsec3.1-lem-1}$ , we know the image of every real map
$(f:\cb P^1\to\cb P^3\#\overline{\cb P}^3)\in\overline\cl'(\undl x)$ represents class
$(a,d-a,k)$ or $(d-a,a,k)$, where $a\in\{0,...,\frac{d-1}{2}\}$,
in the positive basis of the blow-up of quadric $\wt Q$ containing $f(\cb P^1)$.
From Proposition $\ref{subsec3.1-prop-2}$ and Lemma $\ref{subsec3.1-lem-2}$,
we can obtain
$$
s_Y(f(\cb P^1))=
\left\{\begin{array}{ll}
m_{\wt Q}(f(\cb P^1))+\frac{k}2, & \mbox{if $a$ is even,} \\
m_{\wt Q}(f(\cb P^1))+\frac{k}2+1, & \mbox{if $a$ is odd.}
\end{array}
\right.
$$
The sum of spinor states of real elements of $\overline\cl'(\undl x)$ whose images are contained in $\wt Q$ is
$(-1)^{\frac{k}2}W_{\wt Q}((a,b,k),s)$ if $a$ is even and
$(-1)^{1+\frac{k}2}W_{\wt Q}((a,b,k),s)$ if $a$ is odd.
Since equation $(\ref{subsec3.1-eq-1})$ always has a real solution and
two real solutions differ by a real torsion element of order $d-2a$, we know
equation $(\ref{subsec3.1-eq-1})$ has $d-2a$ real solutions for every $a\in\{0,\ldots,\frac{d-1}{2}\}$.
Then one can complete the rest part of the proof with
the same argument of the proof of Theorem $\ref{thm2}$.
\end{proof}

\begin{rem}\label{subsec4.1-rem-2}
The vanishing of $W_{\cb P^3\#\overline{\cb P}^3}(d[l]-k[e],s)$ for positive even $d,k$ with $d\geq k$
and $s\leq d-\frac{k}2-1$ can also be proved in this way.
If $\tilde h$ and $e_{\widetilde Q}$ are on the same component of $\rb\wt C_4$ with $p$ and $\undl x$ is on the
component which does not contain $p$, then equation $(\ref{subsec3.1-eq-1})$ has no real solution.
\end{rem}

%\bibliographystyle{alpha}
%\bibliography{sg}

\begin{thebibliography}{Wel05b}

\bibitem[Beh97]{beh1997}
K.~Behrend.
\newblock {Gromov-Witten invariants in algebraic geometry}.
\newblock {\em Invent. Math.}, 127(3):601--617, 1997.

\bibitem[BG16a]{bg2016a}
F.~Block and L.~G\"{o}ttsche.
\newblock {Refined curve counting with tropical geometry}.
\newblock {\em Compos. Math.}, 152(1):115--151, 2016.

\bibitem[BG16b]{bg2016}
E.~Brugall\'{e} and P.~Georgieva.
\newblock {Pencils of quadrics and Gromov-Witten-Welschinger invariants of
  $\mathbb{C}P^3$}.
\newblock {\em Math. Ann.}, 365(1-2):363--380, 2016.

\bibitem[BM07]{bm2007}
E.~Brugall\'{e} and G.~Mikhalkin.
\newblock Enumeration of curves via floor diagrams.
\newblock {\em C. R. Math. Acad. Sci. Paris}, 345(6):329--334, 2007.

\bibitem[BP15]{bp2014}
E.~Brugall\'{e} and N.~Puignau.
\newblock {On Welschinger invariants of symplectic $4$-manifolds}.
\newblock {\em Comment. Math. Helv.}, 90(4):905--938, 2015.

\bibitem[Bru15]{bru2015}
E.~Brugall\'{e}.
\newblock {Floor diagrams relative to a conic, and GW-W invariants of del Pezzo
  surfaces}.
\newblock {\em Adv. Math.}, 279:438--500, 2015.

\bibitem[Bru18]{brugalle2016}
E.~Brugall\'{e}.
\newblock {Surgery of real symplectic fourfolds and Welschinger invariants}.
\newblock {\em J. Singul.}, 17:267--294, 2018.

\bibitem[DH18]{dh2016a}
Y.~Ding and J.~Hu.
\newblock {Welschinger invariants of blow-ups of symplectic 4-manifolds}.
\newblock {\em Rocky Mountain J. Math.}, 48(4):1105--1144, 2018.

\bibitem[Din17]{ding2017}
Y.~Ding.
\newblock {Higher genus Welschinger invariants under real surgeries}.
\newblock {\em arXiv:$1710.03603$}, pages 1--16, 2017.

\bibitem[DK00]{dk00}
A.~Degtyarev and V.~Kharlamov.
\newblock {Topological properties of real algebraic varieties: Rokhlin's way}.
\newblock {\em Russian Math. Surveys}, 55(4):735--814, 2000.

\bibitem[DK02]{dk02}
A.~Degtyarev and V.~Kharlamov.
\newblock Real rational surfaces are quasi-simple.
\newblock {\em J. Reine Angew. Math.}, 551:87--99, 2002.

\bibitem[FO99]{fo1999}
K.~Fukaya and K.~Ono.
\newblock {Arnold conjecture and Gromov-Witten invariant}.
\newblock {\em Topology}, 38(5):933--1048, 1999.

\bibitem[GZ17]{gz2013}
P.~Georgieva and A.~Zinger.
\newblock {Enumeration of real curves in $\cb P^{2n-1}$ and a
  Witten-Dijkgraaf-Verlinde-Verlinde relation for real Gromov-Witten
  invariants}.
\newblock {\em Duke Math. J.}, 166(17):3291--3347, 2017.

\bibitem[GZ18]{gz2018}
P.~Georgieva and A.~Zinger.
\newblock {Real Gromov-Witten theory in all genera and real enumerative
  geometry: construction}.
\newblock {\em Ann. of Math. (2)}, 188(3):685--752, 2018.

\bibitem[HS12]{hs2012}
A.~Horev and J.~Solomon.
\newblock {The open Gromov-Witten-Welschinger theory of blowups of the
  projective plane}.
\newblock {\em arXiv:$1210.4034$}, pages 1--34, 2012.

\bibitem[IKS09]{iks2009}
I.~Itenberg, V.~Kharlamov, and E.~Shustin.
\newblock {A Caporaso-Harris type formula for Welschinger invariants of real
  toric del Pezzo surfaces}.
\newblock {\em Comment. Math. Helv.}, 84(1):87--126, 2009.

\bibitem[IKS13]{iks2013}
I.~Itenberg, V.~Kharlamov, and E.~Shustin.
\newblock {Welschinger invariants of real del Pezzo surfaces of degree
  $\geqslant 3$}.
\newblock {\em Math. Ann.}, 355(3):849--878, 2013.

\bibitem[IKS15]{iks2013a}
I.~Itenberg, V.~Kharlamov, and E.~Shustin.
\newblock {Welschinger invariants of real del Pezzo surfaces of degree
  $\geqslant 2$}.
\newblock {\em Internat. J. Math.}, 26(8):1550060, 63, 2015.

\bibitem[Kol97]{kollar97}
J.~Koll\'{a}r.
\newblock {Real algebraic surfaces}.
\newblock {\em arXiv: alg-geom/$9712003$}, 1997.

\bibitem[Kol15]{k2014}
J.~Koll\'{a}r.
\newblock {Examples of vanishing Gromov-Witten-Welschinger invariants}.
\newblock {\em J. Math. Sci. Univ. Tokyo}, 1:261--278, 2015.

\bibitem[LT98a]{lt1998a}
J.~Li and G.~Tian.
\newblock {Virtual moduli cycles and Gromov-Witten invariants of algebraic
  varieties}.
\newblock {\em J. Amer. Math. Soc.}, 11(1):119--174, 1998.

\bibitem[LT98b]{lt1998}
J.~Li and G.~Tian.
\newblock {Virtual moduli cycles and Gromov-Witten invariants of general
  symplectic manifolds}.
\newblock In {\em Topics in symplectic 4-manifold (Irvine, CA, 1996), 47-83,
  First Int. Press Lect. Ser., I}. Int. Press, Cambridge, MA,, 1998.

\bibitem[PSW08]{psw2008}
R.~Pandharipande, J.~Solomon, and J.~Walcher.
\newblock {Disk enumeration on the quintic 3-fold}.
\newblock {\em J. Amer. Math. Soc.}, 21(4):1169--1209, 2008.

\bibitem[RT95]{rt1995}
Y.~Ruan and G.~Tian.
\newblock {A mathematical theory of quantum cohomology}.
\newblock {\em J. Diff. Geom.}, 42(2):259--367, 1995.

\bibitem[Rua99]{ruan1999}
Y.~Ruan.
\newblock {Virtual neighborhoods and pseudo-holomorphic curves}.
\newblock {\em Turkish J. Math.}, 23(1):161--231, 1999.

\bibitem[Wel05a]{wel2005a}
J.-Y. Welschinger.
\newblock Invariants of real symplectic $4$-manifolds and lower bounds in real
  enumerative geometry.
\newblock {\em Invent. Math.}, 162(1):195--234, 2005.

\bibitem[Wel05b]{wel2005b}
J.-Y. Welschinger.
\newblock {Spinor states of real rational curves in real algebraic convex
  3-manifolds and enumerative invariants}.
\newblock {\em Duke Math. J.}, 127(1):89--121, 2005.

\bibitem[Wel07]{wel2007}
J.-Y. Welschinger.
\newblock {Optimalit\'e, congruences et calculs d¡¯invariants des vari\'et\'es
  symplectiques r\'eelles de dimension quatre}.
\newblock {\em arXiv:$0707.4317$}, pages 1--42, 2007.

\bibitem[Wit91]{w1991}
E.~Witten.
\newblock {Two-dimensional gravity and intersection theory on moduli space}.
\newblock In {\em Surveys in differential geometry (Cambridge, MA, 1990)},
  pages 243--310. Lehigh Univ., Bethlehem, PA, 1991.

\end{thebibliography}

\end{document}